\newcounter{theorem}
\def\openthm#1#2{\refstepcounter{theorem}\bigskip

{\noindent\bf#1~\thetheorem\if#2!{. }\else{ (#2).}\fi}
\it}
\def\thmskip{}
\newenvironment{theorem}[1][!]{\openthm{Theorem}{#1}}{\thmskip}
\newenvironment{lemma}[1][!]{\openthm{Lemma}{#1}}{\thmskip}
\newenvironment{corollary}[1][!]{\openthm{Corollary}{#1}}{\thmskip}
\newenvironment{proposition}[1][!]{\openthm{Proposition}{#1}}{\thmskip}
\newenvironment{conjecture}[1][!]{\openthm{Conjecture}{#1}}{\thmskip}
\newcounter{remark}
\def\openrem#1#2{\refstepcounter{remark}\bigskip
{\noindent \it \bfseries#1~\theremark\if#2!{. }\else{ (#2). }\fi}}
\newenvironment{remark}[1][!]{\openrem{Remark}{#1}}{\qed}
\def\ilist{\renewcommand{\labelenumi}{(\roman{enumi})}}
\def\R{\mathbb{R}}
\def\Z{\mathbb{Z}}
\def\CC{{\rm C}}
\def\pp{\partial}
\def\<{\langle}
\def\>{\rangle}
\def\eps{\varepsilon}
\def\E{\mathcal{E}}
\def\Fs{\mathcal{F}}
\def\Ys{\mathcal{Y}}
\def\Us{\mathcal{U}}
\def\As{\mathcal{A}}
\def\Cs{\mathcal{C}}
\def\Ps{\mathcal{P}}
\def\vb{v}
\def\wb{w}
\def\zb{z}
\DeclareMathOperator{\cond}{cond}
\renewcommand{\cases}[1]{\left\{ \begin{array}{ll} #1 \end{array} \right.}
\newcommand{\smfrac}[2]{{\textstyle \frac{#1}{#2}}}
\newcommand{\lpnorm}[2]{\left\|#1\right\|_{\ell^{#2}_\eps}}
\def\half{\smfrac{1}{2}}
\begin{document}

\title[Sharp Stability Estimates for QCF Methods]{Sharp Stability
  Estimates\\ for the Force-based Quasicontinuum Method}

\author{M. Dobson}
\author{M. Luskin}
\address{School of Mathematics, 206 Church St. SE,
  University of Minnesota, Minneapolis, MN 55455, USA}
\email{dobson@math.umn.edu, luskin@umn.edu}

\author{C. Ortner} \address{Mathematical Institute, St. Giles' 24--29,
  Oxford OX1 3LB, UK} \email{ortner@maths.ox.ac.uk}

\date{\today}

\keywords{atomistic-to-continuum coupling, quasicontinuum method,
  sharp stability estimates}

\thanks{ This work was supported in part by DMS-0757355, DMS-0811039,
  the Department of Energy under Award Number DE-FG02-05ER25706, the
  Institute for Mathematics and Its Applications, the University of
  Minnesota Supercomputing Institute, the University of Minnesota
  Doctoral Dissertation Fellowship, and the EPSRC critical mass
  programme ``New Frontier in the Mathematics of Solids.''  }

\subjclass[2000]{65Z05,70C20}

\begin{abstract}
  A sharp stability analysis of atomistic-to-continuum coupling
  methods is essential for evaluating their capabilities for
  predicting the formation and motion of lattice defects. We formulate
  a simple one-dimensional model problem and give a detailed analysis
  of the stability of the force-based quasicontinuum (QCF) method. The
  focus of the analysis is the question whether the QCF method is able
  to predict a critical load at which fracture occurs.

  Numerical experiments show that the spectrum of a linearized QCF
  operator is identical to the spectrum of a linearized energy-based
  quasi-nonlocal quasicontinuum operator (QNL), which we know from our
  previous analyses to be positive below the critical load. However,
  the QCF operator is non-normal and it turns out that it is not
  generally positive definite, even when all of its eigenvalues are
  positive. Using a combination of rigorous analysis and numerical
  experiments, we investigate in detail for which choices of
  ``function spaces'' the QCF operator is stable, uniformly in the
  size of the atomistic system.

  Force-based multi-physics coupling methods are popular techniques to
  circumvent the difficulties faced in formulating consistent
  energy-based coupling approaches. Even though the QCF method is
  possibly the simplest coupling method of this kind, we anticipate
  that many of our observations apply more generally.
\end{abstract}

\maketitle


\section{Introduction}

Low energy equilibria for crystalline materials are typically
characterized by localized defects that interact with their
environment through long-range elastic fields.  Atomistic-to-continuum
coupling methods seek to make the accurate computation of such
problems possible by using the accuracy of atomistic modeling only in
the neighborhood of defects where the deformation is highly
non-uniform. At some distance from the defects, sufficient accuracy
can be obtained by the use of continuum models, which facilitate the
reduction of degrees of freedom. The accuracy of the atomistic model
at the defect combined with the efficiency of a continuum model for
the far field enables, in principle, the reliable simulation of
systems that are inaccessible to pure atomistic or pure continuum
models.

Typical test problems for atomistic-to-continuum coupling methods have
been dislocation formation under an indenter, crack tip deformation,
and deformation and fracture of grain
boundaries~\cite{Miller:2003a}. In each of these problems, the crystal
deforms quasistatically until the equilibrium equations become
singular, for example, when a dislocation is formed or moves or when a
crack tip advances.  Depending of the nature of the singularity, the
crystal will then typically undergo a dynamic process when further
loaded.

The quasicontinuum (QC) approximation models the continuum region by
using an energy density that exactly reproduces the lattice-based
energy density at uniform strain (the Cauchy-Born rule).  Several
variants of the QC approximation have been proposed that differ in how
the atomistic and continuum regions are coupled~\cite{Ortiz:1995a,
  Shenoy:1999a, Miller:2003a, Shimokawa:2004, E:2006, Dobson:2008a,
  Gunzburger:2008a, Gunzburger:2008b,
  blan05}. Analyses of QC approximation have been given in
  \cite{E:2005a,Ortner:2008a,mingyang,LinP:2003a, LinP:2006a,Dobson:2008b}.
   We refer to \cite{doblusort:qce.stab} for a detailed review
of the formulation and analysis, relevant to the present work, of
different QC methods.  Other coupling models are analyzed
in~\cite{park07, Badia:2007, prud06}.

In \cite{doblusort:qce.stab}, we have begun to investigate whether the
QC method can reliably predict the formation of defects. The main
ingredient to establish whether or not this is the case is a sharp
analysis to predict under which conditions the QC method is
``stable.'' More precisely, we ask whether there exist ``stable''
solutions of the QC method {\em up to a critical load}. We have begun
to investigate this question in some depth for the most common
energy-based QC formulations in \cite{doblusort:qce.stab}. In the
present paper, we present a corresponding sharp stability analysis for
the force-based quasicontinuum (QCF) method~\cite{Dobson:2008a,
  curt03, Shenoy:1999a}.

We focus on a one dimensional periodic chain with next-nearest
neighbour pair interactions, which is introduced in Section
\ref{sec:intro:model_problem}. For this model, the uniform
configuration ceases to be stable when the applied tensile strain
reaches a critical value (fracture).

For the atomistic model and for energy-based QC formulations,
coercivity (positivity) of the second variation evaluated at the equilibrium
solution provides the natural notion of stability. However, the QCF
method, which we describe in Sections \ref{sec:model:qcf} and
\ref{sec:proj_forces}, leads to non-conservative equilibrium
equations, and therefore, positivity of the linearized QCF operator
may be an inappropriate notion of stability. Indeed, we prove in Section
\ref{sec:no_coercivity} that, generically, the linearized QCF operator
is indefinite.

As a consequence, we consider two further notions of stability. First,
we investigate for which choices of discrete function spaces (that is,
for which choices of topologies) does the linearized QCF operator have
an inverse that is bounded uniformly in the size of the atomistic
system. In Section \ref{sec:qcf_ana:opstab}, we present several sharp
stability results as well as interesting counterexamples. However,
these operator stability results do not necessarily correspond to any
physical notion of stability. Hence, in Section
\ref{sec:qcf_ana:dynstab}, we propose the notion of {\em dynamical
  stability}, which can be reduced to certain properties of the
eigenvalues. A careful numerical study suggests that the spectrum of
the linearized QCF operator and that of the linearized quasi-nonlocal
QC operator (QNL) (see \cite{Shimokawa:2004} and Section
\ref{sec:qnl}) are identical. Combined with our previous results
\cite{doblusort:qce.stab}, this indicates that the QCF method is {\em
  dynamically stable} up to the critical load for fracture.


\clearpage
\section{The force-based quasicontinuum method}

\subsection{The atomistic model problem}
\label{sec:intro:model_problem}
We consider deformations from the reference lattice $\eps \Z$, where
$\eps > 0$ is a scaling that we will fix below. For the sake of
simplicity, we admit only deformations which are periodic
displacements from the uniform state $y_F = F\eps\Z = ( F
\eps\ell)_{\ell \in \Z}$, that is, we admit deformations from the
space
\begin{align*}
  \Ys_F =~& y_F + \Us \qquad \text{where} \\
  \Us =~& \big\{ u \in \R^\Z : u_{\ell+2N} = u_\ell \text{ for }
  \ell \in \Z, \text{ and } {\textstyle \sum_{\ell = -N+1}^N}
  u_\ell = 0 \big\}.
\end{align*}
We call $F$ the {\em macroscopic deformation gradient}, and we set
$\eps = 1/N$ throughout. Although the energies and forces are defined
for general $2N$-periodic displacements, we only admit those with zero
mean, as is common for continuum problems with periodic boundary
conditions, in order to obtain unique solutions to the equilibrium
equations.

We consider only nearest-neighbor and next-nearest neighbor pair
interactions so that the {\em potential energy per period} of a
deformation $y \in \Ys_F$ is given by
\begin{equation*}
  \E_{\rm a}(y) = \eps \sum_{\ell = -N+1}^N \big(\phi(y_\ell')
  + \phi(y_\ell' + y_{\ell+1}')\big),
\end{equation*}
where
\begin{displaymath}
  y_\ell' = \eps^{-1}(y_\ell - y_{\ell-1}),
\end{displaymath}
and where $\phi$ is a Lennard-Jones type interaction potential:
\begin{enumerate}
  \ilist
\item $\phi \in \CC^3((0, +\infty); \R)$,
\item there exists $r_* > 0$ such that $\phi$ is convex in $(0, r_*)$
  and concave in $(r_*, +\infty)$.
\item $\phi^{(k)}(r) \rightarrow 0$ rapidly as $r \nearrow \infty$,
  for $k = 0, \dots, 3$.
\end{enumerate}
Assumption (iii) is not strictly necessary for our analysis but serves
to motivate that next-nearest neighbour interactions are typically
dominated by nearest-neighbour terms.

We assume that the atomistic system is subject to $2N$-periodic
external forces $(f_\ell)_{\ell \in \Z}$ with zero mean, i.e., $f \in
\Us$, so that the {\em total energy per period} takes the form
\begin{equation*}
  \E_a^{tot}(y) = \E_a(y) - \eps \sum_{\ell=-N+1}^N f_\ell y_\ell.
\end{equation*}

Equilibria $y \in \Ys_F$ of the atomistic total energy are solutions
to the equilibrium equations
\begin{equation}
  \label{eq:forces_atom}
  \Fs_{\rm a, \ell}(y)+f_\ell=0, \qquad -\infty<\ell<\infty,
\end{equation}
where the (scaled) atomistic forces $\Fs_{\rm a}: \Ys_F \to \Us^*$
are defined by
\begin{displaymath}
  \Fs_{\rm a, \ell}(y):=- \frac1\eps \frac{\pp \E_{\rm a}(y)}{\pp y_\ell},
  \qquad -\infty<\ell<\infty,
\end{displaymath}
and where $\Us^*$ is the space of linear functionals on $\Us.$
We remark that the translational invariance of the atomistic energy
implies that $\Fs_{\rm a, \ell}(y)$ has zero mean,
\begin{equation}
  \label{invatom}
  \sum_{\ell = -N+1}^N \Fs_{\rm a, \ell}(y)
  =\frac{d}{ds}\E_{\rm a}\left(y-\frac{s}{\eps}e\right)\Big|_{s = 0}
  = 0,
\end{equation}
where $e = (1)_{\ell \in \Z}$ is the unit translation vector. Thus, we
see that, at least heuristically, the external force vector $f$ lies
indeed in the range of the atomistic force operator.

We note, moreover, that $y_F$ is an equilibrium of the atomistic
energy, that is
\begin{displaymath}
  \Fs_{\rm a, \ell}(y_F) = 0 \qquad -\infty<\ell<\infty,
  \quad \text{for all } F > 0.
\end{displaymath}
The question which we will investigate in this paper, beginning in
Section \ref{sec:stab_yF}, is for which $F$ it is a {\em stable
  equilibrium}, and whether the force-based QC method is able to
predict the stability of $y_F$.

\subsection{The local quasicontinuum approximation}
We begin by observing that the atomistic energy can be rewritten as a
sum over the contributions from each atom,
\begin{equation}
  \label{eq:rewrite_Ea}
  \begin{split}
    \E_{\rm a}(y) =~& \eps \sum_{\ell = -N+1}^N E^{\rm a}_\ell(y)
    \qquad \text{where} \\
    E^{\rm a}_\ell(y) =~& \half\big[ \phi(y_\ell')
    + \phi(y_{\ell+1}')
    + \phi(y_{\ell-1}' + y_\ell')
    + \phi(y_{\ell+1}'+y_{\ell+2}') \big].
  \end{split}
\end{equation}
If $y$ is ``smooth'', that is, if $y_\ell'$ varies slowly, then the
atomistic energy can be accurately approximated by the Cauchy--Born or
{\em local quasicontinuum} energy
\begin{equation}
  \label{eq:defn_qcl}
  \begin{split}
    \E_{\rm qcl}(y) =~& \eps \sum_{\ell = -N+1}^N E_\ell^{\rm c}(y),
    \qquad \text{where} \\
    E_\ell^{\rm c}(y) =~& \half \big[ \phi(y_\ell') + \phi(y_{\ell+1}')
    + \phi(2y_\ell') + \phi(2y_{\ell+1}') \big]
    = \half \big[ \phi_{\rm cb}(y_\ell') + \phi_{\rm cb}(y_{\ell+1}') \big],
  \end{split}
\end{equation}
where $\phi_{\rm cb}(r) = \phi(r) + \phi(2r)$ is the {\em
  Cauchy--Born} stored energy density.

In this approximation we have replaced the next-nearest neighbor
interactions by nearest neighbor interactions to obtain a model with
{\em stronger locality}. This makes it possible to coarsen the model
(to remove degrees of freedom), which eventually leads to significant
gains in efficiency~\cite{Miller:2003a, Dobson:2008a}. However, in the
present work we will not consider this additional step.

An equilibrium $y\in\Ys_F$ of the local QC energy is a solution to the
equilibrium equations
\begin{equation}
  \label{eq:forces_qcl}
  \Fs_{\rm c, \ell}(y) + f_\ell = 0, \qquad -\infty<\ell<\infty,
\end{equation}
where the (scaled) local QC forces $\Fs_{\rm c}: \Ys_F \to \Us^*$ are
defined by
\begin{displaymath}
  \Fs_{\rm c, \ell}(y) := - \frac{1}{\eps} \frac{\pp \E_{\rm a}(y)}{\pp y_\ell},
  \qquad -\infty<\ell<\infty.
\end{displaymath}
As in \eqref{invatom} it follows that the vector $\Fs_{\rm c}(y)$ has
zero mean.

\subsection{The force-based quasicontinuum approximation}
\label{sec:model:qcf}

If a deformation $y$ is ``smooth'' except in a small region of the
domain, then it is desirable to couple the accurate atomistic
description with the efficient continuum description.  The force-based
quasicontinuum (QCF) approximation achieves this by mixing the
equilibrium equations of the atomistic model with those of the
continuum model without any interface or transition region.

Suppose that $y$ is ``smooth'' except in a region $\As := \{ -K,
\dots, K \}$, where $K > 1.$ We call $\As$ the {\em atomistic region}
and $\Cs = \{-N+1,\dots, N\} \setminus \As$ the {\em continuum
  region.}  The force-based QC approximation is obtained by evaluating
the forces in the atomistic region by the full atomistic model
\eqref{eq:forces_atom} and the forces in the continuum region by the
local QC model~\eqref{eq:forces_qcl}. This yields the QCF operator for
the (scaled) forces $\Fs_{\rm qcf} : \Ys_F \to \Us^*$, defined by
\begin{equation}
  \label{eq:defn_Fqc_1}
  \Fs_{{\rm qcf}, \ell}(y) := \cases{
    \Fs_{\rm a, \ell}(y), & \quad \text{if~} \ell \in \As, \\
    \Fs_{\rm c, \ell}(y), & \quad \text{if~} \ell \in \Cs.
  }
\end{equation}

Force-based coupling methods such as \eqref{eq:defn_Fqc_1} are
trivially consistent (provided the continuum model is consistent with
the atomistic model) and are therefore a natural remedy for the
inconsistencies one observes when formulating simple energy-based
coupling methods such as the original QC method \cite{Ortiz:1995a}.
Similar constructions have appeared in the literature under several
different names and for various applications (e.g., FeAt
\cite{kohlhoff}, CADD \cite{cadd}, or {\em brutal force mixing}
\cite{hybrid_review}). In the context of the QC method this method was
first described in \cite{Dobson:2008a}, where it was shown that the
force-based QC method is the limit of the so-called {\em ghost-force
  correction iteration} \cite{Shenoy:1999a}.  A basin of attraction
and rate for the convergence the ghost-force correction iteration to
the force-based QC method was given in \cite{Dobson:2008a}.  Sharp
stability estimates for the ghost-force correction iteration are given
in \cite{qcf.iterative}.

Unfortunately, the forces generated by the QCF method are
non-conservative, and hence cannot be associated with an
energy. Moreover, even though both the atomistic forces $\Fs_{\rm
  a}(y)$ and the local QC forces $\Fs_{\rm c}(y)$ have zero mean, it
turns out that this is false for the mixed forces $\Fs_{\rm
  qcf}(y)$. A straightforward computation shows that
\begin{align*}
  \sum_{\ell = -N+1}^N \Fs_{{\rm qcf}, \ell}(y)
  =~& \eps^{-1} [ 2 \phi'(2y'_{-K})  - \phi'(y'_{-K}+y'_{-K-1})
  - \phi'(y'_{-K+1}+y'_{-K})]\\
  &- \eps^{-1} [ 2 \phi'(2 y'_{K+1})  - \phi'(y'_{K+2}+y'_{K+1})
  - \phi'(y'_{K+1}+y'_{K})],
\end{align*}
which is in general non-zero. After introducing the necessary
notation, we will overcome this difficulty by defining a variational
form of the QCF method, which effectively projects the QCF forces onto
the correct range.

\subsection{Norms and variational notation}
For future reference, we recall the backward first difference
$v_\ell'=\eps^{-1}(v_{\ell}-v_{\ell-1})$ and also define the centered
second difference $v_\ell'' = \eps^{-2}(v_{\ell+1} - 2 v_\ell +
v_{\ell-1}).$

For displacements $v \in \Us$ and $1 \leq p \leq \infty,$ we define
the $\ell^p_\eps$ norms,
\begin{displaymath}
  \lpnorm{v}{p} := \cases{
    \Big( \eps\sum_{\ell=-N+1}^N |v_\ell|^p \Big)^{1/p},
    &1 \leq p < \infty, \\
    \max_{\ell = -N+1,\dots,N} |v_\ell|,
    &p = \infty,
  }
\end{displaymath}
and let $\Us^{0,p}$ denote the space $\Us$ equipped with the
$\ell^p_\eps$ norm.  We further define the $\Us^{1,p}$ norm
\begin{displaymath}
  \|v\|_{\Us^{1,p}} := \|v'\|_{\ell^p_\eps},
\end{displaymath}
and let $\Us^{1,p}$ denote the space $\Us$ equipped with the
$\Us^{1,p}$ norm.  Similarly, we define the space $\Us^{2,p}$ and its
associated $\Us^{2,p}$ norm.

The inner product associated with the $\ell^2_\eps$ norm is
\begin{equation*}
  \<v,w\> := \eps \sum_{\ell=-N+1}^N v_\ell w_\ell \qquad
  \text{ for } v, w \in \Us.
\end{equation*}
We have defined the norms $\|\cdot\|_{\ell^p_\eps}$ and the inner
product $\<\cdot,\cdot\>$ on $\Us,$ though we will also apply them for
arbitrary vectors from $\R^{2N}$.

The external force $f = (f_\ell)_{\ell \in \Z}$ is a $2N$-periodic
mean zero vector, and we have seen that the atomistic forces and the
forces in the QCL method are also $2N$-periodic mean zero vectors.
Using the inner product, we can view $f_\ell$ as a linear functional
on $\Us.$ We recall that the space of linear functionals on $\Us$ is
denoted by $\Us^*,$ and we note that each such $T \in \Us^*$ has a
unique representation as a zero mean $2N$-periodic vector $g_T \in
\Us$,
\begin{equation}
  \label{eq:representation}
  T[v] = \< g_T, v \> \qquad \forall v \in \Us.
\end{equation}
We will normally not make a distinction between these
representations. For example, an external force vector $f$ may be
equally interpreted as a linear functional (i.e., $f \in \Us^*$), or
identified with its Riesz representation (i.e., $f \in \Us$).

For $g \in \Us^*$, $s = 0,1,$ and $1 \leq p \leq \infty$, we define
the negative norms $\| g \|_{\Us^{-s,p}}$ as follows:
\begin{equation}\label{meannorm}
  \|g\|_{\Us^{-s,p}}
  := \sup_{\substack{v \in \Us\\\|v\|_{\Us^{s,q}} = 1}} \< g, v \>,
\end{equation}
where $1 \leq q \leq \infty$ satisfies $\frac{1}{p} + \frac{1}{q} =
1.$ We let $\Us^{-s,p}$ denote the space $\Us^*$ equipped with the
$\Us^{-s,p}$ norm.

Since we can identify elements of $\Us^*$ with elements of $\Us$, we
can investigate the relationship between the $\Us^{-0,p}$ and
$\Us^{0,p}$-norms. This will be useful later on in our analysis. It
turns out that $\|\cdot\|_{\Us^{-0,p}} \neq \|\cdot\|_{\Us^{0,p}}$ in
general, but that the following equivalence relation holds:
\begin{equation}
  \label{eq:norm_equiv}
  \|u\|_{\Us^{-0,p}} \leq \|u\|_{\Us^{0,p}}
  \leq 2 \|u\|_{\Us^{-0,p}}
  \quad \text{for all } u \in \Us.
\end{equation}
To see this, we note that the inequality $\|u\|_{\Us^{-0,p}} \leq
\|u\|_{\Us^{0,p}}$ follows from \eqref{meannorm} and H\"older's
inequality.  To prove the second inequality, we use that fact that,
for $u \in \Us$,
\begin{displaymath}
\|u\|_{\Us^{0,p}}
  =\sup_{\substack{v \in \R^{2N}\\\|v\|_{\ell^q_\eps} = 1}} \< u, v \>
  =\sup_{\substack{ v \in \R^{2N}\\ \|v\|_{\ell^q_\eps} = 1}} \< u, v -\bar v\>,
\end{displaymath}
where $\bar v = \frac{1}{2N} \sum_{j=-N+1}^N v_j$ of
$v\in\R^{2N}$. Thus, we can estimate
\begin{displaymath}
  \|u\|_{\Us^{0,p}}
  \le  \|u\|_{\Us^{-0,p}} \sup_{\substack{ v \in \R^{2N}\\ \|v\|_{\ell^q_\eps} = 1}}
  \|v-\bar v\|_{\ell^q_\eps}
  \le 2\|u\|_{\Us^{-0,p}},
\end{displaymath}
where we also used the fact that, by H\"older's inequality, $\| \bar v
\|_{\ell^q_\eps} \le \|v\|_{\ell^q_\eps}$ for any $v\in\R^{2N}$.

\subsection{Projection of non-conservative forces}
\label{sec:proj_forces}
If we interpret forces as elements of $\Us^*,$ then it is natural to
consider the following variational formulation of the QCF method,
\begin{equation}
  \label{eq:qcf_variational}
  \big\< \Fs_{\rm qcf}(y) + f, u \big\> = 0 \qquad \forall\, u \in \Us.
\end{equation}
In other words, \eqref{eq:qcf_variational} requires that $\Fs_{\rm
  qcf}(y) + f = 0$ as a functional in $\Us^*$. This formulation
guarantees that the QCF operator has the correct range.

To obtain an atom-based description of the equilibrium equations, we
explicitly compute the representation of $\Fs_{{\rm qcf}}(y) \in
\Us^*$ as an element of $\Us$ (see also \eqref{eq:representation}),
that is as a zero mean $2N$-periodic vector
$\Ps_\Us \Fs_{{\rm qcf}}(y)$, where $\Ps_\Us$ is defined by
\begin{equation*}
  \big(\Ps_\Us v\big)_\ell = v_\ell
  - \frac{1}{2N} \sum_{j=-N+1}^N v_j.
\end{equation*}
With this notation, the variational equilibrium equations can be
understood as {\em projected} equilibrium equations in atom-based
form,
\begin{equation}
  \label{eq:qcf_projected}
  \big(\Ps_\Us \Fs_{{\rm qcf}}(y) \big)_\ell + f_\ell = 0,
  \qquad -\infty<\ell<\infty.
\end{equation}
The equivalent formulations \eqref{eq:qcf_variational} and
\eqref{eq:qcf_projected} define the correct force-based QC method for
the periodic model problem defined in Section
\ref{sec:intro:model_problem}.

\begin{remark}
  The projection of the QCF equilibrium system is an artifact of the
  periodic boundary conditions. For the displacement boundary
  conditions that we analyzed in \cite{dobs-qcf2}, or for the mixed
  boundary conditions that are considered in \cite{dobsonluskin08}, this
  projection is not necessary.
\end{remark}

\section{Stability of a uniform deformation}
\label{sec:stab_yF}
It is easy to see that, in the absence of external forces, the
uniformly deformed lattice $y = y_F$ is an equilibrium of the
atomistic energy as well as the local QC energy, that is
\begin{displaymath}
  \Fs_{\rm a}(y_F) = 0 \quad \text{and} \quad
  \Fs_{\rm c}(y_F) = 0 \qquad \text{for all }  F > 0.
\end{displaymath}
For some values of $F,$ the equilibrium will be stable, by which we mean that
the second variation
\begin{equation*}
  \E_a''(y_F)[u,v] = \eps \sum_{\ell = -N+1}^N \big\{ \phi_F'' u_\ell' v_\ell'
  + \phi_{2F}'' (u_\ell' + u_{\ell+1}')(v_\ell'+v_{\ell+1}') \big\}, \qquad
  \text{for } u\in\Us,
\end{equation*}
where
\begin{displaymath}
  \phi_F'' := \phi''(F) \quad \text{and} \quad
  \phi_{2F}'' := \phi''(2F),
\end{displaymath}
is positive definite, that is,
\begin{displaymath}
  \E_{\rm a}''(y_F)[u, u] > 0 \qquad \forall u \in \Us \setminus \{0\}.
\end{displaymath}
(We note that a second variation, e.g. $\E_{\rm a}''(y_F)$, may be understood
either as a bilinear form on $\Us$ or a linear operator from $\Us$ to
$\Us^*$.  It can also be expressed as a Hessian matrix with
respect to a given basis for the vector space $\Us.$)

In order to avoid having to distinguish several cases, we will assume
throughout our analysis that $F \geq r_*/2,$ which implies by property
(ii) of the interaction potential that $\phi_{2F}'' \leq 0.$
This assumption holds for most realistic interaction potentials so
long as the chain is not under extreme compression.

As above, we can evaluate the second variation of the local QC energy at $y =
y_F$,
\begin{displaymath}
  \E_{\rm qcl}''(y_F)[u,v] = \eps \sum_{\ell = -N+1}^N A_F u_\ell' v_\ell',
\end{displaymath}
where $A_F$ is the {\em elastic modulus} of the continuum model,
\begin{displaymath}
  A_F := \phi_{\rm cb}''(F) = \phi''_F + 4 \phi''_{2F}.
\end{displaymath}
Thus, we say that $y_F$ is stable for the local QC approximation if
$\E_{\rm qcl}''(y_F)[u,u] > 0$ for all $u \in \Us \setminus \{0\}$.

In \cite{doblusort:qce.stab}, we have given explicit characterizations
for which $F$ the equilibrium $y_F$ is stable in the atomistic model
and in several energy-based QC models. The results for the atomistic
and the local QC models are summarized in the following proposition.

\begin{proposition}[cf. Prop. 1 and 2 in
  \cite{doblusort:qce.stab}]
  \label{th:stab_a_c}
  Let $F \geq r_* / 2$ then the second variations $\E_{\rm a}''(y_F)$,
  respectively $\E_{\rm qcl}''(y_F)$, are positive definite if and
  only if
  \begin{displaymath}
    A_F - \lambda_N^2 \eps^2 \phi''_{2F} > 0, \quad
    \text{respectively if} \quad  A_F > 0,
  \end{displaymath}
  where $2 \leq \lambda_N \leq \pi$.
\end{proposition}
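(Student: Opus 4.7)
The plan is to diagonalize both second variations by a discrete Fourier expansion on $\Us$ and then identify the worst mode. The local QC assertion is essentially immediate: the formula for $\E_{\rm qcl}''(y_F)$ rewrites as
\begin{equation*}
  \E_{\rm qcl}''(y_F)[u,u] = A_F \lpnorm{u'}{2}^2,
\end{equation*}
and a zero-mean $u \in \Us$ with $u' = 0$ is constant, hence vanishes, so positive definiteness on $\Us$ is equivalent to $A_F > 0$.

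For the atomistic case, I would expand $u$ in the orthogonal basis of $\Us$ given by the plane waves $\psi_k(\ell) = e^{i\pi k\ell/N}$ with $k \in \{-N+1,\dots,N\}\setminus\{0\}$, the excluded index $k=0$ encoding the zero-mean constraint. A direct calculation gives the eigenvalue relations
\begin{equation*}
  \psi_{k,\ell}' = \eps^{-1}(1 - e^{-i\pi k/N})\psi_{k,\ell},
  \qquad
  \psi_{k,\ell}' + \psi_{k,\ell+1}' = \eps^{-1}(e^{i\pi k/N} - e^{-i\pi k/N})\psi_{k,\ell},
\end{equation*}
whose squared moduli are $\eps^{-2}\cdot 4\sin^2(\pi k/(2N))$ and $\eps^{-2}\cdot 16\sin^2(\pi k/(2N))\cos^2(\pi k/(2N))$ respectively. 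Substituting into $\E_{\rm a}''(y_F)$ and applying the identity $\phi_F'' + 4\phi_{2F}''\cos^2\theta = A_F - 4\phi_{2F}''\sin^2\theta$, the quadratic form diagonalizes with mode-$k$ weight proportional to $\sin^2(\pi k/(2N))[A_F - 4\phi_{2F}''\sin^2(\pi k/(2N))]$. Positive definiteness is thus equivalent to $A_F - 4\phi_{2F}''\sin^2(\pi k/(2N)) > 0$ for every admissible $k$.

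The assumption $F \geq r_*/2$ gives $\phi_{2F}'' \leq 0$, so this left-hand side is smallest at the smallest admissible value of $\sin^2(\pi k/(2N))$, which occurs at $k = \pm 1$. Setting $\lambda_N := 2N \sin(\pi/(2N))$, so that $\lambda_N^2 \eps^2 = 4\sin^2(\pi/(2N))$, reproduces the claimed criterion $A_F - \lambda_N^2 \eps^2 \phi_{2F}'' > 0$. The enclosure $2 \leq \lambda_N \leq \pi$ then follows from $\lambda_1 = 2\sin(\pi/2) = 2$, the limit $\lambda_N \to \pi$ as $N \to \infty$, and the monotonicity of $N \mapsto N \sin(\pi/(2N))$ (equivalently, $x \mapsto x^{-1}\sin(\pi x/2)$ is decreasing on $(0,1]$, by a short derivative check). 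The point that needs real care is the sign argument picking $k = \pm 1$ as the critical mode: because $\phi_{2F}'' \leq 0$, the next-nearest-neighbour term actually \emph{stabilises} short wavelengths, so the dangerous mode is the longest wavelength rather than the shortest, contrary to naive continuum intuition.
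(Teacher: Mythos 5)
Your argument is correct, and it is essentially the standard spectral argument: note that this paper does not prove Proposition~\ref{th:stab_a_c} at all, but quotes it from Propositions~1 and~2 of \cite{doblusort:qce.stab}. There, the QCL part is handled exactly as you do, and the atomistic part is obtained by the rearrangement $\E_{\rm a}''(y_F)[u,u] = A_F\|u'\|_{\ell^2_\eps}^2 - \eps^2\phi_{2F}''\|u''\|_{\ell^2_\eps}^2$ (via the identity $(a+b)^2 = 2a^2+2b^2-(a-b)^2$ and periodicity), after which, since $\phi_{2F}''\le 0$, positive definiteness reduces to the smallest value of the Rayleigh quotient $\|u''\|^2/\|u'\|^2$ over zero-mean periodic displacements; that minimal value is $4\eps^{-2}\sin^2(\pi\eps/2)=\lambda_N^2$, attained by the longest-wavelength mode. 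Your plane-wave diagonalization computes exactly the same quantity modewise: your weight $4\sin^2(\pi k/(2N))\bigl[A_F-4\phi_{2F}''\sin^2(\pi k/(2N))\bigr]$ is the Fourier-side version of the rearranged form, and your identification of $k=\pm1$ as the critical mode (because $\phi_{2F}''\le 0$ makes the next-nearest-neighbour term stabilise short wavelengths) is the same observation that makes the Rayleigh-quotient minimum the relevant one. The only cosmetic point worth making explicit is that the complex exponentials diagonalize the real quadratic form because the linearized operator is circulant, so cross terms vanish and positivity of all modal weights is equivalent to positive definiteness on the real space $\Us$; with that remark, and your monotonicity check giving $2\le\lambda_N=2N\sin(\pi/(2N))\le\pi$, the proof is complete and matches the cited one in substance.
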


If we denote the critical strains which divide the regions of
stability for the atomistic and QCL models, respectively, by $F_{\rm
  a}^*$ and $F_{\rm c}^*$, then a relatively straightforward error
analysis \cite[Sec. 5]{doblusort:qce.stab} shows that $F_{\rm a}^* =
F_{\rm c}^* + O(\eps^2)$, that is, the QCL model accurately reproduces
the onset of a fracture instability. In the following section, we
investigate whether or not the QCF method has a similar property.

\section{Sharp Stability of the Force-based QC Method}
\label{sec:qcf}
A trivial consequence of the definition of $\Fs_{\rm qcf}$ in
\eqref{eq:defn_Fqc_1} is that $y = y_F$ is also a solution of the QCF
equilibrium equations \eqref{eq:qcf_projected},
\begin{displaymath}
  \Fs_{\rm qcf}(y_F) = 0 \qquad \text{for all } F > 0.
\end{displaymath}
(As a matter of fact, this means that the QCF method is consistent;
though this is not the focus of the present work.)

To investigate the stability of the QCF method we define the
linearized QCF operator $L_{{\rm qcf}, F}:= - \Fs'_{{\rm
    qcf}}(y_F): \Us \to \Us^*$ by
\begin{equation*}
  \< L_{{\rm qcf}, F} u, v \> := - \<\Fs'_{{\rm qcf}}(y_F)[u], v\>
  \qquad \text{ for all } u,\,v \in \Us.
\end{equation*}
The equilibrium equations for the linearized force-based approximation
are then given by $u\in \Us$ satisfying
\begin{equation*}
  \< L_{{\rm qcf}, F} u, v \> = \< f, v\> \qquad \text{ for all } v \in \Us,
\end{equation*}
or in functional form
\begin{displaymath}
  \Ps_\Us L_{{\rm qcf}, F} u = f.
\end{displaymath}
We remark that, while $L_{{\rm qcf}, F} \in L(\Us,\, \Us^*)$, the
projected operator $\Ps_\Us L_{{\rm qcf}, F}$ may be interpreted as a
map from $\Us$ to $\Us$.

\subsection{Lack of coercivity}
\label{sec:no_coercivity}
Since the force field $\mathcal{F}_{{\rm qcf}}(y)$ is non-conservative
and the linearized QCF operator $L_{{\rm qcf},F}$ is not the second
variation of an energy functional, positivity (or coercivity) of
$L_{{\rm qcf},F}$ may be the incorrect notion of stability for the QCF
model.  Indeed, it turns out that, if $N$ is large, then
$L_{{\rm qcf},F}$ cannot be positive definite.

\begin{theorem}
  \label{th:qcf_ana:notcoerc}
  Let $\phi_F'' > 0$ and $\phi_{2F}'' \neq 0$, then there exist
  constants $C_1, C_2$ which may depend on $\phi_F''$ and
  $\phi_{2F}''$, such that, for $N$ sufficiently large and for $2 \leq
  K \leq N/2$,
  \begin{displaymath}
    - C_1 N^{1/2} \leq
    \inf_{\substack{u \in \Us \\ \|u'\|_{\ell^2_\eps} = 1}}  \< L_{{\rm qcf}, F} u, u \>
    \leq - C_2 N^{1/2}.
  \end{displaymath}
\end{theorem}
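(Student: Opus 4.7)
The plan is to decompose $\langle L_{{\rm qcf}, F} u, u\rangle$ into a local-QC ``bulk'' part and an interface correction, and to analyse the correction by hand. First, write $L_{{\rm qcf}, F} = L_{{\rm c}, F} + \chi(L_{{\rm a}, F}-L_{{\rm c}, F})$, where $\chi$ denotes componentwise multiplication by the indicator function of $\As$. Manipulating the next-nearest-neighbour term in the definitions of $\E_{\rm a}''(y_F)$ and $\E_{\rm qcl}''(y_F)$ yields the identity $(\E_{\rm a}'' - \E_{\rm qcl}'')(y_F)[u,v] = -\eps^2 \phi_{2F}'' \langle u'', v''\rangle$ for arbitrary $2N$-periodic $u, v$, and hence
\[
\langle L_{{\rm qcf}, F} u, u\rangle = A_F \|u'\|_{\ell^2_\eps}^2 - \eps^2\phi_{2F}''\langle u'', (\chi u)''\rangle.
\]
The non-normality of $L_{{\rm qcf}, F}$ is encoded in the asymmetric placement of $\chi$ in the second factor. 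An atom-by-atom evaluation shows $(\chi u)''_\ell = \chi_\ell u''_\ell$ except at the four interface atoms $\{\pm K, \pm K \mp 1\}$, where one picks up boundary contributions such as $(\chi u)''_{-K-1} = u_{-K}/\eps^2$ and $(\chi u)''_{-K} = u''_{-K} - u_{-K-1}/\eps^2$. Collecting terms gives
\[
\langle u'', (\chi u)''\rangle = \eps \sum_{\ell = -K}^{K} (u''_\ell)^2 + \eps^{-1}(W_- + W_+),
\]
with Wronskian-type interface terms $W_- := u''_{-K-1} u_{-K} - u''_{-K} u_{-K-1}$ and the analogous $W_+$ at $+K$; the entire sign-indefiniteness of $L_{{\rm qcf}, F}$ lives in $W_\pm$.

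The lower bound is then straightforward. For any $u \in \Us$ with $\|u'\|_{\ell^2_\eps} = 1$, the inverse estimate $\|u''\|_{\ell^2_\eps} \leq 2\eps^{-1}$ gives $\eps\sum_\As(u''_\ell)^2 \leq \|u''\|_{\ell^2_\eps}^2 \leq 4\eps^{-2}$, whence the bulk contribution satisfies $\eps^2 |\phi_{2F}''| \cdot \eps\sum_\As(u''_\ell)^2 \leq 4|\phi_{2F}''|$. For each Wronskian, the discrete Sobolev embedding for mean-zero $u$ gives $\|u\|_{\ell^\infty} \leq \sqrt{2}$, and the pointwise-to-$\ell^2$ bound yields $\|u''\|_{\ell^\infty} \leq \|u''\|_{\ell^2_\eps}/\sqrt{\eps} \leq 2 N^{3/2}$, so $|W_\pm| \leq 4\sqrt{2} N^{3/2}$ and $\eps|\phi_{2F}''| \cdot |W_\pm| \leq C N^{1/2}$. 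Combining, $\langle L_{{\rm qcf}, F} u, u\rangle \geq -C_1 N^{1/2}$.

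For the upper bound I construct an explicit test function $u^\star \in \Us$ that saturates the bound: $u^\star = g + \Delta\chi - \overline{g + \Delta \chi}$, where $g$ is a smooth periodic profile of amplitude $H \sim 1$ with $g(\pm K\eps) \approx H > 0$, $\Delta = -\sqrt{\eps/2}$, and the bar enforces zero mean. The smooth part contributes $\|g'\|_{\ell^2_\eps}^2 = O(1)$ and gives $u^\star_{\pm K} \approx H$; the step $\Delta\chi$ contributes $\|(\Delta\chi)'\|_{\ell^2_\eps}^2 = 1$ and produces large second differences $u^\star{}''_{-K-1} \approx \Delta/\eps^2$, $u^\star{}''_{-K} \approx -\Delta/\eps^2$. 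Inserting into the decomposition, the dominant contribution to $W_-$ is $(\Delta/\eps^2) H - (-\Delta/\eps^2) H = 2H\Delta/\eps^2 = -\sqrt{2} H N^{3/2}$, yielding a Wronskian contribution $-\eps \phi_{2F}'' W_- \approx -\sqrt{2} H |\phi_{2F}''| N^{1/2}$ (negative, using $\phi_{2F}'' < 0$); $W_+$ contributes symmetrically. The bulk $\eps \sum_\As (u^\star{}''_\ell)^2$, although of order $N^2$ due to concentration of $u^\star{}''$ at $\ell = \pm K$, contributes only $O(1)$ after multiplication by $\eps^2|\phi_{2F}''|$, and is dominated. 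Normalising $u^\star$ so that $\|u^\star{}'\|_{\ell^2_\eps} = 1$, one obtains $\langle L_{{\rm qcf}, F} u^\star, u^\star\rangle \leq -C_2 N^{1/2}$.

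The principal difficulty is the test-function construction: three competing scales must cooperate---$|u^\star_{\pm K}| = O(1)$ (to saturate the Wronskian bound), $|u^\star{}'_{\pm K}| = O(\sqrt{N})$ (so that $|W_\pm|$ reaches $N^{3/2}$), and $\|u^\star{}'\|_{\ell^2_\eps} = O(1)$---while the sign of the leading Wronskian contribution must be arranged to be negative. The naive candidates all fail: a localised atomic bump forces $|u^\star_{\pm K}| = O(N^{-1/2})$; a pure step (forced by the zero-mean constraint) gives $|u^\star_{\pm K}| = O(\sqrt{\eps})$; and plane waves $\sin(\alpha\ell)$ are eigenfunctions of the centred second difference with $W_\pm \equiv 0$. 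Only the superposition of an $O(1)$ smooth background with a carefully-signed $O(\sqrt{\eps})$ interface step strikes the right balance; the sign must be chosen so that the combination of $\phi_{2F}'' < 0$ with the directions of $u^\star_{\pm K}$ and $u^\star{}'_{\pm K}$ produces a net negative contribution.
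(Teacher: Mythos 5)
Your argument is correct, and in substance it follows the same route as the proof the paper relies on: the paper omits the proof of Theorem~\ref{th:qcf_ana:notcoerc} and refers to Lemma~4.1 of \cite{dobs-qcf2}, where the Dirichlet-case result is obtained from an explicit test function concentrated at the atomistic/continuum interface (the authors remark that this test function is already periodic and only needs to be shifted to zero mean). What you do differently is to make the periodic case self-contained: the splitting $L_{{\rm qcf},F} = L_{{\rm c},F} + \chi(L_{{\rm a},F}-L_{{\rm c},F})$ combined with the identity $(\E_{\rm a}''-\E_{\rm qcl}'')(y_F)[u,v] = -\eps^2\phi_{2F}''\langle u'',v''\rangle$ (which is valid for arbitrary periodic $u,v$, as your summation-by-parts computation shows) isolates the indefiniteness in the two Wronskian-type interface terms $W_\pm$, after which the lower bound follows from the inverse estimate $\|u''\|_{\ell^2_\eps}\le 2\eps^{-1}\|u'\|_{\ell^2_\eps}$ and the embedding $\|u\|_{\ell^\infty}\lesssim\|u'\|_{\ell^2_\eps}$, and the upper bound from the two-scale test function ($O(1)$ smooth profile plus $O(\sqrt{\eps})$ step supported on $\As$). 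This is exactly the mechanism behind the cited construction, so I would call it the same approach, written out in full for the periodic setting; the benefit of your version is that the $N^{1/2}$ scaling of both bounds is transparently tied to the single interface quantity $\eps\,\phi_{2F}''\,W_\pm$.

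Three small points to tighten, none of which affects the argument. First, in the test function you need $g(\pm K\eps)-\overline{g+\Delta\chi}$ bounded away from zero; since $2\le K\le N/2$ the interface atoms lie in $[-\smfrac12-\eps,\smfrac12+\eps]$, so choose a fixed smooth periodic $g$ with small mean that is bounded below by a positive constant on that interval (a pure cosine or tent profile degenerates exactly at $K=N/2$, so this choice should be stated). Second, the theorem only assumes $\phi_{2F}''\neq 0$; your sign choice $\Delta=-\sqrt{\eps/2}$ uses $\phi_{2F}''<0$ (the paper's standing assumption $F\ge r_*/2$), and for $\phi_{2F}''>0$ one simply flips the sign of $\Delta$. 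Third, the exceptional set for $(\chi u)''$ is $\{\pm K,\ \pm(K+1)\}$; your formulas are consistent with this, so the set written as $\{\pm K,\ \pm K\mp1\}$ is only a typo.
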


In \cite{dobs-qcf2}, we have shown this result for a Dirichlet
boundary value problem.  The proof carries over from the Dirichlet
case almost verbatim and is therefore omitted. As a matter of fact,
the test function which we explicitly constructed in the proof of
Lemma 4.1 in \cite{dobs-qcf2} is already periodic and, after shifting
it to have zero mean, can therefore be used again to prove Theorem
\ref{th:qcf_ana:notcoerc}.

Theorem \ref{th:qcf_ana:notcoerc} forces us to consider alternative
notions of stability. For example, one could understand $L_{{\rm qcf}, F}$
as a linear operator between appropriately chosen discrete
function spaces, determine for which values of $F$
it is bijective, and estimate the norm of its inverse.
Physically, this measures the magnitude of the response of the equilibrium
configuration to perturbations in external forces, and
in Section~\ref{sec:qcf_ana:opstab} we attempt to find
the largest interval surrounding $F=1$ and consider this region to be
the approximation of the stable region given by operator stability of $L_{{\rm qcf}, F}.$
However, an operator can be bijective and its inverse can have bounded norm even when it
has negative eigenvalues, so for a general equilibrium state such as $y_F,$
invertibility alone is not a suitable criterion for determining
 the ``physical'' stability.
To be able to decide whether a stable
equilibrium of the QCF equations is also stable in a physical
sense, we propose a notion of dynamical stability in Section~\ref{sec:qcf_ana:dynstab}.

\subsection{Stability as a linear operator}
\label{sec:qcf_ana:opstab}
Since $\Us$ is a finite-dimensional linear space, the choice of
topology with which we equip it is unimportant to the question whether
$L_{{\rm qcf}, F}$ is invertible. However, it has surprising
repercussions when we analyze an operator norm of the inverse, that is
$\| L_{{\rm qcf}, F}^{-1} \|$, in the limit as $N \to \infty$.

Our strongest and simplest result is obtained when we view $\Ps_\Us
L_{{\rm qcf}, F}$ as a map from $\Us^{2,\infty}$ to $\Us^{0,\infty}.$

\begin{theorem}
  \label{th:qcf_ana:sharp_stab_W2inf}
  If $|\phi_F''| - (4 + 2\eps) |\phi_{2F}''| > 0,$ then $\Ps_\Us
  L_{{\rm qcf},F} : \Us \to \Us$ is bijective and
  \begin{displaymath}
    \big\| (\Ps_\Us L_{{\rm qcf},F})^{-1} \big\|_{L(\Us^{0, \infty},\ \Us^{2,\infty})}
    \leq \frac{1}{|\phi_F''| - (4+2\eps) |\phi_{2F}''|}.
  \end{displaymath}
\end{theorem}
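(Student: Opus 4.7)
\bigskip\noindent\textbf{Proof proposal for Theorem \ref{th:qcf_ana:sharp_stab_W2inf}.}
The plan is to write the linearised QCF operator explicitly in terms of $u''$, split off the dominant nearest-neighbour part (the $\phi_F''$ term) from a small next-nearest-neighbour remainder (the $\phi_{2F}''$ term), and then bound the remainder, including its contribution to the mean which is removed by the projection $\Ps_\Us$.

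First I would linearise the formulas for $\Fs_{\rm a}$ and $\Fs_{\rm c}$ at $y_F$ and rewrite them in terms of the centred second difference $u''$. A direct computation gives, for $\ell \in \Cs$,
\begin{equation*}
  L_{{\rm qcf},F}u|_\ell = -A_F\, u_\ell''
  = -\phi_F''\, u_\ell'' - 4\phi_{2F}''\, u_\ell'',
\end{equation*}
while for $\ell \in \As$,
\begin{equation*}
  L_{{\rm qcf},F}u|_\ell
  = -\phi_F''\, u_\ell'' - \phi_{2F}''\bigl(u_{\ell-1}'' + 2u_\ell'' + u_{\ell+1}''\bigr).
\end{equation*}
In both cases I can write $L_{{\rm qcf},F}u = -\phi_F''\, u'' + g[u]$, where $g[u]$ is a short stencil in $u''$ with coefficients in $\phi_{2F}''$ whose $\ell^\infty_\eps$-norm satisfies
\begin{equation*}
  \|g[u]\|_{\ell^\infty_\eps} \le 4|\phi_{2F}''|\, \|u''\|_{\ell^\infty_\eps}.
\end{equation*}

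Next I would compute the mean of $L_{{\rm qcf},F}u$. Since $u''$ telescopes to zero by periodicity, the only contribution comes from $g[u]$, and the bulk terms of $g[u]$ also cancel except at the interface indices $\pm K, \pm K{\pm}1$. A careful rearrangement (splitting $\sum_{\ell\in \As} u_{\ell\pm 1}''$ and combining with $\sum_{\ell\in\Cs} 4u_\ell''$ using $\sum_\ell u_\ell''=0$) yields
\begin{equation*}
  \sum_{\ell=-N+1}^{N} L_{{\rm qcf},F}u|_\ell
  = -\phi_{2F}''\bigl[(u_{-K-1}''-u_{-K}'') + (u_{K+1}''-u_K'')\bigr],
\end{equation*}
so $|\overline{L_{{\rm qcf},F}u}| \le \tfrac{4|\phi_{2F}''|}{2N}\|u''\|_{\ell^\infty_\eps} = 2\eps|\phi_{2F}''|\,\|u''\|_{\ell^\infty_\eps}$. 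This is the origin of the sharp $2\eps$ correction, and identifying it correctly is the step I expect to require the most care.

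Combining the two estimates, since $\Ps_\Us L_{{\rm qcf},F}u = -\phi_F''\,u'' + g[u] - \overline{L_{{\rm qcf},F}u}\cdot e$, I obtain
\begin{equation*}
  |\phi_F''|\,\|u''\|_{\ell^\infty_\eps}
  \le \|\Ps_\Us L_{{\rm qcf},F}u\|_{\ell^\infty_\eps}
  + \bigl(4+2\eps\bigr)|\phi_{2F}''|\,\|u''\|_{\ell^\infty_\eps}.
\end{equation*}
Under the hypothesis $|\phi_F''|-(4+2\eps)|\phi_{2F}''|>0$ this rearranges to the stated operator-norm bound. For bijectivity I note that, in finite dimensions, injectivity suffices: if $\Ps_\Us L_{{\rm qcf},F}u=0$ then the inequality forces $u''=0$, and the only $u\in\Us$ (periodic with zero mean) whose second difference vanishes is $u=0$. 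This completes the plan.
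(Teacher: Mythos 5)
Your proposal is correct and follows essentially the same route as the paper's proof: split $\Ps_\Us L_{{\rm qcf},F}$ into the $-\phi_F'' u''$ part plus a $\phi_{2F}''$-stencil, bound the stencil in $\ell^\infty_\eps$ by $4\|u''\|_{\ell^\infty_\eps}$, and observe that the mean removed by $\Ps_\Us$ reduces to the four interface second differences, giving the $2\eps$ correction; your interface-sum formula and constants agree with the paper's computation for $\Ps_\Us \tilde L_2$. The only (welcome) difference is that you spell out the bijectivity step ($u''=0$ with periodicity and zero mean forces $u=0$), which the paper leaves implicit.
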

\begin{proof}
  Recalling the definition of $L_{{\rm qcf},F}$, we can rewrite this
  operator in the form
  \begin{displaymath}
    \mathcal{P}_\Us L_{{\rm qcf},F} = \phi_F'' L_1
    + \phi_{2F}'' \mathcal{P}_\Us \tilde L_2,
  \end{displaymath}
  where $L_1$ and $\tilde L_2$ are given by
  \begin{align*}
    (L_1 u)_\ell =~& - \eps^{-2} (u_{\ell+1} - 2 u_\ell + u_{\ell-1}),
    \quad \text{and} \\
    (\tilde L_2 u)_\ell =~& \cases{
      - \eps^{-2} (u_{\ell+2} - 2 u_\ell + u_{\ell-2}), & \ell = -K, \dots, K, \\
      -4\eps^{-2} (u_{\ell+1} - 2 u_\ell + u_{\ell-1}), &
      \text{otherwise.}  }
  \end{align*}
  We note that $\mathcal{P}_\Us L_1 = L_1$ which is why we have
  included the projection only in the second-neighbor operator.

  The projection of $\tilde L_2$ given by $\mathcal{P}_\Us \tilde L_2$
  is
  \begin{displaymath}
    (\mathcal{P}_\Us \tilde L_2 u)_\ell
    = (\tilde L_2 u)_\ell - \frac{\eps}{2} \sum_{j = -N+1}^N (\tilde L_2 u)_j.
  \end{displaymath}
  We will prove below that
  \begin{equation}
    \label{eq:qcf_ana:bndL2tilde}
    \| \mathcal{P}_\Us \tilde L_2 \|_{L(\Us^{2,\infty},\ \Us^{0,\infty})}
    \leq 4 + 2 \eps.
  \end{equation}
  Assuming that this bound is established, we obtain
  \begin{align*}
    \| \Ps_\Us L_{{\rm qcf,F}} u \|_{\ell^\infty} \geq~&
    |\phi_F''| \| L_1 u \|_{\ell^\infty}
    - |\phi_{2F}''| \| \mathcal{P}_\Us \tilde L_2 u \|_{\ell^\infty} \\
    \geq~& (|\phi_F''| - (4+2\eps) |\phi_{2F}''|) \| u''\|_{\ell^\infty},
  \end{align*}
  which is equivalent to the statement of the theorem.

  To prove \eqref{eq:qcf_ana:bndL2tilde}, we note that, for $\ell =
  -K, \dots, K$, we have
  \begin{displaymath}
    (\tilde L_2 u)_\ell = - (u_{\ell+1}'' + 2 u_{\ell}'' + u_{\ell-1}'')
    = - 4 u_{\ell}'' - (u_{\ell+1}'' - 2 u_{\ell}'' + u_{\ell-1}'').
  \end{displaymath}
  Using the first representation of $(\tilde L_2 u)_\ell$ above, we
  immediately see that (for $\ell$ from the continuum region this
  statement is trivial)
  \begin{displaymath}
    \big|(\tilde L_2 u)_\ell\big| \leq 4 \| u''\|_{\ell^\infty}
    \qquad \text{for } \ell = -N+1, \dots, N.
  \end{displaymath}
  From the second representation of $(\tilde L_2 u)_\ell$, we obtain
  \begin{align*}
    \sum_{\ell = -N+1}^N  (\tilde L_2 u)_\ell
    =~& - 4\sum_{\ell = -N+1}^N u_\ell''
    - \sum_{\ell= -K}^K (u_{\ell+1}'' - 2 u_{\ell}'' + u_{\ell-1}'') \\
    =~& - u_{K+1}'' + u_{K}'' + u_{-K}'' - u_{-K-1}'',
  \end{align*}
  and hence,
  \begin{align*}
    \big|(\mathcal{P}_\Us \tilde L_2 u)_\ell\big|
    \leq~& \big|(\tilde L_2 u)_\ell\big|
    + \Bigg|\frac{\eps}{2} \sum_{j = -N+1}^N (\tilde L_2 u)_j\Bigg| \\
    \leq~& 4 \|u''\|_{\ell^\infty} + \smfrac{\eps}{2}
    \big(  |u_{K+1}''| + |u_{K}''| + |u_{-K-1}''| + |u_{-K}''| \big) \\[1mm]
    \leq~& (4 + 2\eps) \|u''\|_{\ell^\infty}.
  \end{align*}
  This establishes \eqref{eq:qcf_ana:bndL2tilde} and thus concludes
  the proof of the theorem.
\end{proof}

\begin{remark}
 With a small modification, Theorem
  \ref{th:qcf_ana:sharp_stab_W2inf} remains true for an arbitrary
  choice of the atomistic region $\As$. The correction $2\eps$ then
  needs to be replaced by $n_i \eps$ where $n_i$ is the number of
  interfaces between the atomistic and the continuum region.
\end{remark}

\begin{remark}
  Theorem \ref{th:qcf_ana:sharp_stab_W2inf} also holds in the case
  of the artificial Dirichlet boundary conditions analyzed in
  \cite{dobs-qcf2}. In that case, the projection $\mathcal{P}_\Us$ is
  not required and therefore the correction $2\eps$ does not occur at
  all.
\end{remark}

\medskip Theorem \ref{th:qcf_ana:sharp_stab_W2inf} is, in many
respects, a very satisfactory result. It shows that, except for a
small error, QCF is stable whenever the atomistic model is. However,
the choice of function space $\Us^{2,\infty}$ is somewhat unusual, and
it is highly unlikely that such a result would remain true in higher
dimensions, as it requires a regularity that is not normally exhibited
by linear elliptic systems.

It is therefore also interesting to analyze the QCF operator as a map
from $\Us^{1,p}$ to $\Us^{-1,p} = (\Us^{1,q})^*$, where $1 \leq p \leq
\infty$. However, we saw in \cite[Theorem 7.1]{dobs-qcf2} for a
Dirichlet problem that, for $1 \leq p < \infty$, the stability of
$L_{{\rm qcf},F}$ is not uniform in $N$. The following theorem, whose proof
is contained in Appendix~\ref{p}, establishes the same result for the
periodic model we consider in the present paper.
\begin{theorem}
  \label{th:qcf:infsup}
  Suppose that $\phi_F''>0$, $\phi_{2F}'' \in \R \setminus \{0\}$, and
  $1 \leq p < \infty$. Then there exists a constant $C > 0$, depending
  on $\phi_F''$ and $\phi_{2F}''$, such that, for $2 \leq K < N-2$,
  \begin{equation*}
    \| L_{{\rm qcf},F}^{-1} \|_{L(\Us^{-1,p},\ \Us^{1,p})} \geq C N^{1/p}.
  \end{equation*}
\end{theorem}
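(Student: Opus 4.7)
The plan is to establish the lower bound by constructing an explicit test function. By the definition of the operator norm and duality,
$$\|L_{\rm qcf,F}^{-1}\|_{L(\Us^{-1,p},\,\Us^{1,p})} = \sup_{u \in \Us\setminus\{0\}}\frac{\|u\|_{\Us^{1,p}}}{\|L_{\rm qcf,F}u\|_{\Us^{-1,p}}},$$
so it is enough to exhibit one $u\in\Us$ for which this ratio is at least $CN^{1/p}$. Because the $\Us^{-1,p}$ norm is computed by pairing against zero-mean test functions, the projection $\mathcal{P}_\Us$ does not affect this norm and plays no role in the lower bound.

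The next step is to isolate the non-normality by decomposing
$$L_{\rm qcf,F}u = A_F L_1 u - \phi_{2F}''\eps^2\,\chi_\As u^{(4)},$$
where $u^{(4)}$ denotes the centred fourth difference and $\chi_\As$ is the characteristic function of $\As$. This is essentially the decomposition used in the proof of Theorem~\ref{th:qcf_ana:sharp_stab_W2inf}: the asymmetry of $L_{\rm qcf,F}$ is entirely carried by the non-commutativity of $\chi_\As$ with the discrete derivatives. Since $A_F L_1$ has a uniformly bounded inverse from $\Us^{-1,p}$ to $\Us^{1,p}$, any instability must arise from this interface correction.

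Following the Dirichlet construction of \cite[Theorem~7.1]{dobs-qcf2}, I would probe $L_{\rm qcf,F}^{-1}$ with an external force concentrated near the interface; the prototypical choice is the interfacial dipole $f = \eps^{-1}(e_K - e_{K+1})$, where $e_\ell$ denotes the unit vector at lattice site $\ell$. The $\Us^{-1,p}$ norm of this $f$ is small: for any $v\in\Us$ with $\|v\|_{\Us^{1,q}}=1$,
$$|\<f, v\>| = |v_K - v_{K+1}| = \eps\,|v'_{K+1}|
\le \eps\cdot\eps^{-1/q}\|v'\|_{\ell^q_\eps} = \eps^{1/p} = N^{-1/p},$$
so $\|f\|_{\Us^{-1,p}}\le N^{-1/p}$. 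A direct computation following the lines of \cite{dobs-qcf2} then shows that the solution $u$ of $L_{\rm qcf,F}u = f$ has $\|u\|_{\Us^{1,p}}$ bounded below by a constant independent of $N$: the fourth-difference correction $\chi_\As u^{(4)}$ forces $u$ to develop an $O(1)$ response spread over a region commensurate with the entire lattice, far larger than the $O(N^{-1/p})$ localised response one would obtain from $A_F L_1$ alone. Combining these two bounds yields the claimed $N^{1/p}$ lower bound.

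The main technical obstacle is adapting the matching argument from the Dirichlet case to the periodic setting. In the Dirichlet problem the two interfaces may be treated independently; in the periodic case they are coupled through the zero-mean and $2N$-periodicity constraints, and the matching becomes a small linear system. This is handled by choosing $u$ to be odd under $\ell\mapsto-\ell$, which automatically enforces zero mean and $2N$-periodicity and, together with the symmetry of $\As$, decouples the matching into two independent problems essentially equivalent to the Dirichlet case. The pointwise estimates on $u$ and on $L_{\rm qcf,F}u$ then follow by direct computation, reproducing the analysis of \cite{dobs-qcf2} with only notational changes.
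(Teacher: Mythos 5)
Your overall strategy is the mirror image of the paper's: the paper constructs an explicit displacement $v$ with $\|v'\|_{\ell^p_\eps}$ bounded below and shows, via the conjugate representation $\< L_{{\rm qcf},F} v, w\> = \< E_{{\rm qcf},F} v', w'\>$, that $E_{{\rm qcf},F}v'$ is supported on only five sites near one interface, so that $\|L_{{\rm qcf},F}v\|_{\Us^{-1,p}} \leq C N^{-1/p}$; no equation is ever solved. You instead fix a small force (the interfacial dipole $f=\eps^{-1}(e_K-e_{K+1})$, whose bound $\|f\|_{\Us^{-1,p}}\le N^{-1/p}$ you verify correctly) and claim that the solution $u=L_{{\rm qcf},F}^{-1}f$ has $\|u\|_{\Us^{1,p}}\ge c>0$ uniformly in $N$. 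That claim is the entire content of the theorem in your formulation, and it is asserted, not proved. To establish it you would have to characterize the solution of $L_{{\rm qcf},F}u=f$, i.e. solve $E_{{\rm qcf},F}u' = -e_{K+1}+c\,e$, which couples a recurrence in the atomistic region to constants in the continuum region through the interface terms $\alpha_{\pm K}(u')$ and the zero-mean constraint; one must then show that $\alpha_{\pm K}(u')$ (or the constant $c$) is bounded away from zero so that $u'$ really carries an $O(1)$ heaviside-like profile over $O(N)$ sites. Nothing in your write-up does this, and the heuristic ``the fourth-difference correction forces an $O(1)$ response spread over the lattice'' is not a substitute: $\eps^2 u^{(4)}$ by itself (without the cutoff) is harmless, and the instability comes specifically from the non-variational interface terms that appear after summation by parts, which is exactly the quantitative information you would need about the solution.

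There are two further concrete problems. First, your appeal to the Dirichlet result of \cite[Thm.~7.1]{dobs-qcf2} is precisely what the paper says cannot be transplanted: the Dirichlet argument used a matrix representation of the inverse that is unavailable in the periodic setting, which is why the paper re-proves the result by direct construction. Second, your device for handling the periodic coupling is internally inconsistent: the solution $u$ is determined by $f$, and your chosen $f$ (a dipole at the single interface $K,K+1$) is not odd under $\ell\mapsto-\ell$, so you cannot ``choose $u$ odd'' to decouple the two interfaces; you would at least have to antisymmetrize $f$ and then still carry out the matching computation. The simplest repair is to run the argument in the paper's direction: take $v'_\ell = h_{\ell-K-1}$ away from the interface and modify it at the four sites $K-1,\dots,K+2$ so that $\alpha_{-K}(v')=0$ and $\alpha_K(v')=-A_F$; then the heaviside contributions in $E_{{\rm qcf},F}v'$ cancel identically, $E_{{\rm qcf},F}v'$ is supported on finitely many sites with $O(1)$ entries, and H\"older's inequality gives the inf-sup bound $\le CN^{-1/p}$ while $\|v'\|_{\ell^p_\eps}\ge \smfrac14[(N/2-2)\eps]^{1/p}$ is bounded below.
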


It remains to investigate the case $p = \infty$. The following result
is an extension of \cite[Thm. 5.1]{dobs-qcf2} to periodic boundary
conditions. Its proof is contained in Appendix~\ref{sec:qcf_stab}.

\begin{theorem}
  \label{th:qcf:stability}
  If $F \geq r_*/2$ and $\phi_F'' + 8 \phi_{2F}'' > 0,$ then
  \begin{displaymath}
   \| L_{{\rm qcf},F}^{-1} \|_{L(\Us^{-1,\infty},\ \Us^{1,\infty})}
   \leq \frac{2}{\phi_F'' + 8 \phi_{2F}''}.
  \end{displaymath}
\end{theorem}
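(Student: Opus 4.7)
The argument extends the proof of the Dirichlet analogue \cite[Thm.~5.1]{dobs-qcf2} to the periodic setting. The factor $2$ in the final constant enters through the norm equivalence \eqref{eq:norm_equiv}; the rest of the proof mirrors the Dirichlet argument, with additional care required to handle the mean-zero constraint and both atomistic-continuum interfaces of the periodic geometry.

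\textbf{Step 1 (primitive of $f$).} Let $g\in\Us$ be the mean-zero primitive of $f$, i.e.\ $g'=f$. Summation by parts yields $\<f,v\> = -\eps\sum_\ell g_{\ell-1}\,v'_\ell$ for every $v\in\Us$, so
$\|f\|_{\Us^{-1,\infty}} = \|g\|_{\Us^{-0,\infty}}$.
Applying \eqref{eq:norm_equiv} gives $\|g\|_{\ell^\infty_\eps}\le 2\|f\|_{\Us^{-1,\infty}}$, which accounts for the factor $2$.

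\textbf{Step 2 (stress form and an invariant on $\As$).} Rewrite $\Ps_\Us L_{{\rm qcf},F}u = f$ as $L_{{\rm qcf},F}u = g' + c_0\mathbf 1$, where $c_0\in\R$ is the (unknown) mean of $L_{{\rm qcf},F}u$. Define the periodic atomistic stress
\[
\sigma_\ell := \phi_F''u'_\ell + \phi_{2F}''\bigl(u'_{\ell-1}+2u'_\ell+u'_{\ell+1}\bigr),
\]
so $\sigma_{\ell+1}-\sigma_\ell = -\eps(L_{a,F}u)_\ell$ by direct computation. Since $L_{a,F}=L_{{\rm qcf},F}$ on $\As$ and $L_{a,F}-L_{{\rm qcf},F} = -\phi_{2F}''(u''_{\ell-1}-2u''_\ell+u''_{\ell+1})$ on $\Cs$, the lifted sequence $\beta_\ell := \sigma_\ell + g_{\ell-1} + \eps c_0\ell$ is \emph{constant} on $\As$ (with value $\beta^\ast$), while on $\Cs$ its increments equal $\eps\phi_{2F}''(u''_{\ell-1}-2u''_\ell+u''_{\ell+1})$.

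\textbf{Step 3 (formula for $c_0$).} The sequence $\beta$ is quasi-periodic, $\beta_{\ell+2N}=\beta_\ell+2c_0$. Summing the $\Cs$-increments around the ring, constancy of $\beta$ on $\As$ and a telescoping computation yield
\[
c_0 \;=\; \tfrac{\phi_{2F}''}{2}\bigl[u'_{-K+1}-2u'_{-K}+u'_{-K-1}+2u'_{K+1}-u'_K-u'_{K+2}\bigr],
\]
hence $|c_0|\le 4|\phi_{2F}''|\,\|u'\|_{\ell^\infty_\eps}$.

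\textbf{Step 4 (inversion and closing the estimate).} The map $u'\mapsto\sigma$ is the circulant convolution $M=\phi_F''I+\phi_{2F}''(S^{-1}+2I+S)$, which satisfies $M\mathbf 1 = A_F\mathbf 1$. Under the sign hypothesis $\phi_F''+8\phi_{2F}''>0$ (which in particular implies $A_F>0$ and the M-matrix structure of $M$), the row-sum argument gives $\|M^{-1}\|_{L(\ell^\infty_\eps,\ell^\infty_\eps)} = 1/A_F$. Using the representation $\sigma_\ell = \beta^\ast - g_{\ell-1} - \eps c_0\ell$ on $\As$, controlling $\beta^\ast$ via the mean-zero property $\sum_\ell\sigma_\ell=0$ (which expresses $\beta^\ast$ in terms of the variation of $\beta$ on $\Cs$), and inserting $|c_0|\le 4|\phi_{2F}''|\|u'\|_{\ell^\infty_\eps}$ from Step~3 yields
\[
A_F\,\|u'\|_{\ell^\infty_\eps} \;\le\; \|g\|_{\ell^\infty_\eps} + 4|\phi_{2F}''|\,\|u'\|_{\ell^\infty_\eps},
\]
equivalently $(\phi_F''+8\phi_{2F}'')\|u'\|_{\ell^\infty_\eps}\le\|g\|_{\ell^\infty_\eps}$. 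Combined with Step~1 this completes the proof.

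\textbf{Main obstacle.} The crux is Step~4: the naive bound $\|u'\|_{\ell^\infty_\eps}\le A_F^{-1}\|\sigma\|_{\ell^\infty_\eps}$ does not close directly because $\sigma$ depends on $u'$ through both $\beta^\ast$ and $c_0$. The sharp denominator $A_F - 4|\phi_{2F}''| = \phi_F''+8\phi_{2F}''$ arises from absorbing the Step~3 bound on $c_0$ into the left-hand side; controlling $\beta^\ast$ at no additional cost requires re-expressing the $\Cs$-variation of $\beta$ using the continuum equation $-A_F u'' = g' + c_0$, so that this variation is bounded in terms of $\|g\|_{\ell^\infty_\eps}$ and $|c_0|$ rather than $\|u'\|_{\ell^\infty_\eps}$. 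This careful bookkeeping, avoiding spurious losses at the two interfaces $\ell=-K-1,-K$ and $\ell=K,K+1$, is the delicate step.
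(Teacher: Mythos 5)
Your Steps 1--3 are sound: Step 1 is a legitimate way to produce the factor $2$ (it is equivalent to the paper's Lemma \ref{th:alt_neg_norm}, since $-g_{\ell-1}=(L_1^{-1}f)'_\ell$), the stress identity and the constancy of $\beta$ on $\As$ in Step 2 are correct, and your formula $c_0=\smfrac12\big(\alpha_{-K}(u')-\alpha_K(u')\big)$ in Step 3 agrees with the exact mean of the linearized QCF forces. The genuine gap is Step 4, which you yourself flag as the crux but only assert. The inequality $A_F\|u'\|_{\ell^\infty_\eps}\le\|g\|_{\ell^\infty_\eps}+4|\phi_{2F}''|\,\|u'\|_{\ell^\infty_\eps}$ does not follow from the bookkeeping you sketch: on $\As$ one has $\sigma_\ell=\beta^*-g_{\ell-1}-\eps c_0\ell$, while on $\Cs$ the telescoped increments give $\beta_\ell-\beta^*=-\alpha_K(u')+O\big(|\phi_{2F}''|A_F^{-1}\|g\|_{\ell^\infty_\eps}\big)$, and the condition $\sum_\ell\sigma_\ell=0$ only pins $\beta^*$ to (roughly) a convex combination of $0$ and $\alpha_K(u')$ weighted by the relative sizes of $\As$ and $\Cs$. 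Hence $\max_\ell|\beta_\ell|$ can be nearly $|\alpha_K(u')|\le 4|\phi_{2F}''|\,\|u'\|_{\ell^\infty_\eps}$, and the drift $|\eps c_0\ell|\le|c_0|\le 4|\phi_{2F}''|\,\|u'\|_{\ell^\infty_\eps}$ is a second, separate loss; estimated as you propose, the two interface contributions add to $8|\phi_{2F}''|\,\|u'\|_{\ell^\infty_\eps}$, which only yields stability under a condition of the type $\phi_F''+12\phi_{2F}''>0$. Moreover, your suggested remedy (rewriting the $\Cs$-variation of $\beta$ via $-A_Fu''=g'+c_0$) injects additional terms of size $4|\phi_{2F}''|A_F^{-1}\|g\|_{\ell^\infty_\eps}$, so the coefficient of $\|g\|_{\ell^\infty_\eps}$ is no longer $1$ and the stated bound $2/(\phi_F''+8\phi_{2F}'')$ is not recovered.

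The missing idea is exactly what the paper's proof supplies. Instead of splitting the interface effect into a step of size $\alpha_K$, a linear drift $\eps c_0\ell$, and a floating constant $\beta^*$, the paper represents the two interface point forces by the mean-zero periodic heaviside $h$ of \eqref{eq:heaviside}, whose amplitude is at most $\smfrac12-\smfrac\eps4$: by the conjugate representation of Lemma \ref{th:qcf_rep} and preconditioning with $L_1$ (Lemma \ref{th:qppqcf:lemma_z}), $z'=(L_1^{-1}L_{{\rm qcf},F}u)'$ equals the QCF stress, minus an $O(\eps)\|u'\|_{\ell^\infty_\eps}$ mean correction, plus $-\alpha_{-K}(u')h_{\cdot+K-1}+\alpha_K(u')h_{\cdot-K-1}$. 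Since $\max_\ell(|h_{\ell+K-1}|+|h_{\ell-K-1}|)\le 1-\smfrac\eps2$, both interfaces together cost a single $4|\phi_{2F}''|\,\|u'\|_{\ell^\infty_\eps}$ (the $O(\eps)$ pieces cancel exactly), and the lower bound $\|\sigma(u')\|_{\ell^\infty_\eps}\ge A_F\|u'\|_{\ell^\infty_\eps}$ then gives the sharp loss $A_F-4|\phi_{2F}''|=\phi_F''+8\phi_{2F}''$, with no recourse to the continuum equation or to $g$ at all. Your strong-form approach could in principle be repaired by estimating the combined sawtooth $\beta_\ell-\eps c_0\ell$ minus an optimally chosen constant as a single object -- which is precisely the heaviside bookkeeping above -- but as written the argument does not close with the claimed constant.
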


Theorem \ref{th:qcf:stability} establishes {\em operator stability} of
the $L_{{\rm qcf},F}$ operator, uniformly in $N$, provided that
$\phi_F'' + 8 \phi_{2F}'' > 0$. Compared with with Proposition
\ref{th:stab_a_c} this result predicts a significantly smaller
stability region than either the atomistic model or the continuum
model. We employ numerical experiments to see whether the condition
$\phi_F'' + 8 \phi_{2F}'' > 0$ is sharp.

The norm $\| L_{{\rm qcf},F}^{-1} \|_{L(\Us^{-1,\infty},\
  \Us^{1,\infty})}$ is difficult to calculate explicitly, so we will
estimate it in terms of the $\ell^\infty$-operator norm of a related
matrix. To that end, we note that, according to Lemma
\ref{th:qcf_rep}, $L_{{\rm qcf}, F}$ can be represented in terms of a
conjugate operator, $E_{{\rm qcf}, F}$, by
\begin{displaymath}
  \< L_{{\rm qcf}, F} u, v \> = \< E_{{\rm qcf}, F} u', v' \>
  \qquad \forall u, v \in \Us.
\end{displaymath}
An explicit representation of $E_{{\rm qcf}, F}$ is provided in
\eqref{qcfvar2}. Formula \eqref{qcfvar2} gives
an $\R^{2N \times 2N}$ matrix representation for
$E_{{\rm qcf}, F}$ such that $E_{{\rm qcf}, F}e= A_F e$ where
$e = (1, \dots, 1)^T.$
It thus follows that the projected operator $\Ps_\Us E_{{\rm
    qcf}, F} : \R^{2N} \to \R^{2N}$ satisfies
\begin{equation*}
  \Ps_\Us E_{{\rm qcf}, F} : \Us \to \Us,
  \quad \text{and} \quad
  \Ps_\Us E_{{\rm qcf}, F} e = 0.
\end{equation*}
Here, and for the remainder of the section, we identify $\Us$ with the
subspace of $\R^{2N}$ of zero mean vectors. After these preliminary
remarks, we establish the following result.

\begin{proposition}
  \label{th:estimate_Lqcfinv}
  The QCF operator $L_{{\rm qcf}, F} : \Us \to \Us^*$ is invertible if
  and only if $(\Ps_\Us E_{{\rm qcf}, F} + e \otimes e) \in \R^{2N
    \times 2N}$ is invertible, and
  \begin{displaymath}
    \smfrac12 \| T \|_{\infty}
    \leq \| L_{{\rm qcf}, F}^{-1} \|_{L(\Us^{-1, \infty},\ \Us^{1, \infty})}
    \leq 2\| T \|_{\infty},
  \end{displaymath}
  where
  \begin{displaymath}
    T = \Ps_\Us (\Ps_\Us E_{{\rm qcf}, F} + e \otimes e)^{-1}
    \Ps_\Us,
  \end{displaymath}
  and where $\| T \|_\infty$ denotes the $\ell^\infty$-operator norm
  of $T$.
\end{proposition}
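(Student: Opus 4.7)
The plan is to reduce the equation $L_{{\rm qcf},F} u = f$ to a problem for $g=u'$ and a discrete primitive $F$ of $f$, and then to relate invertibility and operator norms to those of $M := \Ps_\Us E_{{\rm qcf},F} + e\otimes e$.

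First I would introduce a discrete primitive. For $f\in\Us$ (identified with $\Us^*$), the equations $F_{\ell+1}-F_\ell = -\eps f_\ell$ have a unique $2N$-periodic solution with zero mean (periodicity uses that $f$ has zero mean), and a summation-by-parts shows $\<f,v\> = \<F,v'\>$ for every $v\in\Us$. Combined with \eqref{eq:norm_equiv} and the obvious identity between $\|\cdot\|_{\Us^{-1,\infty}}$ and $\|F\|_{\Us^{-0,\infty}}$, this yields the equivalence
\[
\smfrac12 \|F\|_{\ell^\infty_\eps} \leq \|f\|_{\Us^{-1,\infty}} \leq \|F\|_{\ell^\infty_\eps}.
\]
Using Lemma~\ref{th:qcf_rep}, the equation $L_{{\rm qcf},F}u = f$ then becomes $\<E_{{\rm qcf},F}g,w\>=\<F,w\>$ for all $w\in\Us$, i.e.\ $\Ps_\Us E_{{\rm qcf},F}g = F$ with $g\in\Us$.

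Next I would prove the invertibility equivalence via the augmentation trick. Since $E_{{\rm qcf},F}e=A_Fe$ we have $\Ps_\Us E_{{\rm qcf},F}e=0$, so $e\in\ker(\Ps_\Us E_{{\rm qcf},F})$, motivating the addition of $e\otimes e$. The key observation is that $\<e,e\>=2$ and $\<e,\Ps_\Us x\>=0$; testing $Mx=0$ against $e$ therefore forces $e^Tx=0$, so $\ker M\subset\Us$, on which $M$ coincides with $\Ps_\Us E_{{\rm qcf},F}$. From this, $Mx=0$ is equivalent to the existence of a nontrivial zero-mean antiderivative $u$ with $L_{{\rm qcf},F}u=0$, giving the equivalence. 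Conversely, for any $F\in\Us$, the solution $g = M^{-1}F$ satisfies $e^Tg=0$ (test with $e$ again), so $g\in\Us$ and $(e\otimes e)g=0$, which gives $\Ps_\Us E_{{\rm qcf},F}g = F$; integrating $g$ produces a solution of $L_{{\rm qcf},F}u=f$.

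Because $F\in\Us$ and $g=M^{-1}F\in\Us$, we have $TF = \Ps_\Us M^{-1}\Ps_\Us F = M^{-1}F = g$, so $\|u\|_{\Us^{1,\infty}} = \|g\|_{\ell^\infty_\eps} = \|TF\|_{\ell^\infty_\eps}$. Combining this with the norm equivalence for $f$ versus $F$ above yields
\[
\sup_{F\in\Us\setminus\{0\}}\frac{\|TF\|_{\ell^\infty}}{\|F\|_{\ell^\infty}}
\;\leq\; \|L_{{\rm qcf},F}^{-1}\|_{L(\Us^{-1,\infty},\,\Us^{1,\infty})}
\;\leq\; 2\sup_{F\in\Us\setminus\{0\}}\frac{\|TF\|_{\ell^\infty}}{\|F\|_{\ell^\infty}}.
\]

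The final and subtler step is to convert the supremum over $\Us$ into the full $\ell^\infty$-operator norm $\|T\|_\infty$ on $\R^{2N}$. Since $T = \Ps_\Us(\cdot)\Ps_\Us$, any $x\in\R^{2N}$ gives $Tx = T(\Ps_\Us x)$, so restricting to $\Us$ does not lose any values of $Tx$; the only difference is whether we divide by $\|x\|_{\ell^\infty}$ or by $\|\Ps_\Us x\|_{\ell^\infty}$. The elementary lemma I would prove is: for every $F\in\Us$,
\[
\smfrac12 \|F\|_{\ell^\infty} \;\leq\; \min_{c\in\R}\|F+ce\|_{\ell^\infty} \;\leq\; \|F\|_{\ell^\infty},
\]
where the lower bound uses that $F$ has zero mean (so $\max F\geq 0$ and $\min F\leq 0$, hence $(\max F-\min F)/2 \geq \|F\|_\infty/2$). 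This immediately gives the two-sided comparison
\[
\sup_{F\in\Us\setminus\{0\}}\frac{\|TF\|_{\ell^\infty}}{\|F\|_{\ell^\infty}}
\;\leq\; \|T\|_\infty
\;\leq\; 2\sup_{F\in\Us\setminus\{0\}}\frac{\|TF\|_{\ell^\infty}}{\|F\|_{\ell^\infty}},
\]
and combining with the previous display yields $\tfrac12\|T\|_\infty \leq \|L_{{\rm qcf},F}^{-1}\|\leq 2\|T\|_\infty$. The main obstacle is bookkeeping: making sure the projection $\Ps_\Us$, the two factors of $2$ from \eqref{eq:norm_equiv} and from the mean-centering argument above, and the identification of $\Us$ as a subspace of $\R^{2N}$ all compose consistently rather than accumulating extra losses.
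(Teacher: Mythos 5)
Your proposal is correct and follows essentially the same route as the paper's proof: conjugating $L_{{\rm qcf},F}$ to $E_{{\rm qcf},F}$ via gradients (your explicit discrete primitive is just a concrete form of the paper's inf--sup/duality step), identifying $T=(\Ps_\Us E_{{\rm qcf},F})^{-1}$ on $\Us$ through the $e\otimes e$ augmentation, and paying a factor $2$ on each side when comparing the mean-zero-constrained norms with the full $\ell^\infty$ operator norm (your mean-centering lemma $\min_c\|F+ce\|_{\ell^\infty}\geq\frac12\|F\|_{\ell^\infty}$ is the same elementary fact the paper invokes via \eqref{eq:norm_equiv} and $\|\Ps_\Us f\|_{\ell^\infty_\eps}\leq 2\|f\|_{\ell^\infty_\eps}$). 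The only substantive difference is that you spell out the invertibility equivalence, which the paper dispatches with ``follows from the discussion above,'' and that detail is handled correctly.
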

\begin{proof}
  The first statement follows from the discussion above.

  To prove the upper and lower bounds for $\| L_{{\rm qcf}, F}^{-1}
  \|_{L(\Us^{-1, \infty}, \Us^{1, \infty})},$ we first note that, by
  definition of $T$, it follows that
  \begin{displaymath}
    T \Ps_\Us E_{{\rm qcf}, F} f = \Ps_\Us E_{{\rm qcf}, F}  T f = f
    \qquad \text{for all } f \in \Us,
  \end{displaymath}
  that is, $T = (\Ps_\Us E_{{\rm qcf}, F})^{-1}$ on $\Us$. In
  addition, we also have $T e = 0$.

  Next, we note that
  \begin{displaymath}
    \begin{split}
      \frac{1}{\| L_{{\rm qcf},F}^{-1} \|_{L(\Us^{-1,\infty},\ \Us^{1,\infty})}}
      &=
      \inf_{\substack{\vb\in\Us \\ \lpnorm{\vb'}{\infty} = 1}}
      \sup_{\substack{\wb\in\Us \\ \lpnorm{\wb'}{1} = 1}}
      \<  L_{{\rm qcf},F} \vb,\,\wb \> \\
      &=
      \inf_{\substack{\vb\in\Us \\ \lpnorm{\vb'}{\infty} = 1}}
      \sup_{\substack{\wb\in\Us \\ \lpnorm{\wb'}{1} = 1}}
      \<  E_{{\rm qcf},F} \vb',\,\wb' \>
      =
      \frac{1}{\| E_{{\rm qcf},F}^{-1} \|_{L(\Us^{-0,\infty},\ \Us^{0,\infty})}}.
    \end{split}
  \end{displaymath}
  Since $T = (\Ps_\Us  E_{{\rm qcf},F})^{-1}$ on $\Us$, it follows that
  \begin{displaymath}
    \| L_{{\rm qcf},F}^{-1} \|_{L(\Us^{-1,\infty},\ \Us^{1,\infty})}
    = \| T \|_{L(\Us^{-0,\infty},\ \Us^{0,\infty})}.
  \end{displaymath}

  To prove the upper bound, we use \eqref{eq:norm_equiv} to estimate
  \begin{displaymath}
   \| T \|_{L(\Us^{-0,\infty},\ \Us^{0,\infty})}
   = \sup_{\substack{f \in \Us\\f \neq 0}}\frac{ \| T f\|_{\Us^{0,\infty}}}{
     \|f\|_{\Us^{-0,\infty}} }
   \leq \sup_{\substack{f \in \Us\\f \neq 0}}\frac{ \| T f\|_{\ell^\infty_\eps}}{
     \smfrac12 \|f\|_{\ell^\infty_\eps} }
   \leq 2\sup_{\substack{f \in \R^{2N}\\ f \neq 0}}\frac{ \| T f\|_{\ell^\infty_\eps}}{
     \|f\|_{\ell^\infty_\eps} }
   = 2\|T\|_\infty.
  \end{displaymath}
  To prove the lower bound, we first note that $T\Ps_\Us =T .$ We will
  also use the fact that $\| \Ps_\Us f \|_{\ell^\infty_\eps} \leq 2
  \|f\|_{\ell^\infty_\eps}$ for all $f \in \R^{2N}.$ Employing also
  \eqref{eq:norm_equiv} again, we can deduce that
  \begin{align*}
    \| T \|_{L(\Us^{-0,\infty},\ \Us^{0,\infty})}
   & =\sup_{\substack{f \in \R^{2N} \\ \Ps_\Us f \neq 0}}\frac{ \| T\Ps_\Us f\|_{\Us^{0,\infty}}}{
     \|\Ps_\Us f\|_{\Us^{-0,\infty}} }
    \ge  \sup_{\substack{f \in \R^{2N} \\ \Ps_\Us f\neq 0}}\frac{ \| T\Ps_\Us f\|_{\Us^{0,\infty}}}{
     \|\Ps_\Us f\|_{\Us^{0,\infty}} }
     =\sup_{\substack{f \in \R^{2N} \\ \Ps_\Us f\neq 0}}  \frac{ \| T f\|_{\ell^\infty_\eps}}{
     \|\Ps_\Us f\|_{\ell^\infty_\eps} }\\
   &
   \geq
   \sup_{\substack{f \in \R^{2N} \\ \Ps_\Us f\neq 0}}  \frac{ \| T f\|_{\ell^\infty_\eps}}{
     2 \| f\|_{\ell^\infty_\eps} }
    = \frac 12 \sup_{\substack{f \in \R^{2N} \\ f\neq 0}}  \frac{ \| T f\|_{\ell^\infty_\eps}}{
     \| f\|_{\ell^\infty_\eps} }
   = \smfrac12 \| T \|_\infty.
  \end{align*}
  The penultimate equality holds because $\Ps_\Us f = 0$ implies $T f
  = 0.$
\end{proof}

In Proposition \ref{th:estimate_Lqcfinv} we have reduced the
estimation of the operator norm of $L_{{\rm qcf}, F}^{-1}$ to the
computation of the $\ell^\infty$-operator norm (which is simply the
largest row sum) of a matrix $T\in \R^{2N \times 2N},$ which is
explicitly available (note that $\Ps_\Us=I-\frac 12 e\otimes e\in
\R^{2N \times 2N} $).

\begin{figure}
  \begin{center}
\includegraphics[width=10cm]{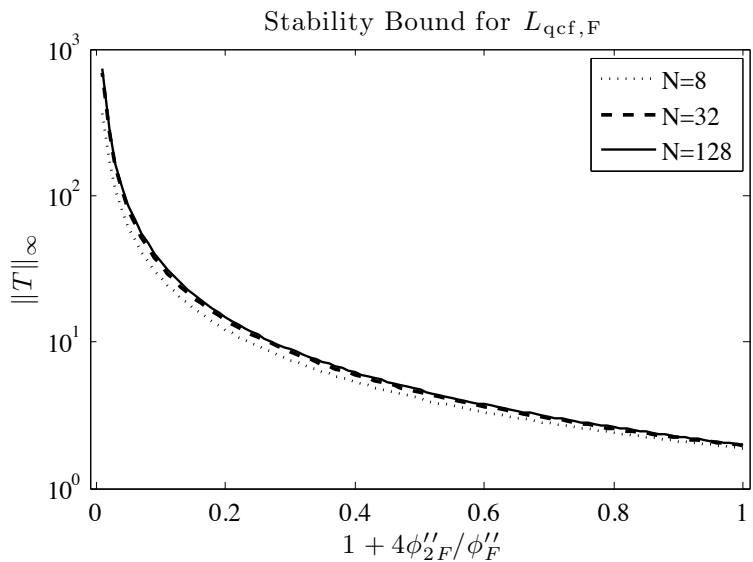} \\
    \caption{\label{fig:U1inf_stab} Computation of $\| T \|_{\infty}$
      where $T = \Ps_\Us (\Ps_\Us E_{{\rm qcf}, F} + e \otimes e)^{-1}
      \Ps_\Us$, which gives lower and upper bounds for $\| L_{{\rm
          qcf}, F}^{-1} \|_{L(\Us^{-1, \infty},\ \Us^{1, \infty})}$
      (cf.  Proposition~\ref{th:estimate_Lqcfinv}).  The graphs
      indicate that $L_{{\rm qcf},F}$ is stable as an operator from
      $\Us^{1,\infty}$ to $\Us^{-1,\infty}$, uniformly in $N$, for all
      macroscopic strains $F$ up to the critical strain for QCL and
      QNL.}
  \end{center}
\end{figure}

In Figure~\ref{fig:U1inf_stab}, we plot the norm of $T$ as a function
of $A_F / \phi_F'' = 1 + 4 \phi_{2F}'' / \phi_F''$. We clearly observe that
$L_{{\rm qcf},F}$ is in fact stable for all macroscopic gradients $F$ for
which $A_F>0$, that is, the bound required in Theorem
\ref{th:qcf:stability} is not sharp. Moreover, the numerical
experiments shown in Figure \ref{fig:U1inf_stab} support the following
conjecture.

\begin{conjecture}
  \label{th:qcf:stability:num}
  If $\phi_F'' + 4 \phi_{2F}'' > 0,$ then
  \begin{equation*}
   \| L_{{\rm qcf},F}^{-1} \|_{L(\Us^{-1,\infty},\ \Us^{1,\infty})}
   \leq \smfrac{1}{\phi''_F} \eta\Big(1 + 4 \smfrac{\phi''_{2F}}{
     \phi''_F}\Big),
  \end{equation*}
  where $\eta$ does not depend on $N$ or $K,$ but $\eta\Big(1 + 4
  \smfrac{\phi''_{2F}}{\phi''_F}\Big) \rightarrow \infty$ as $1 + 4
  \smfrac{\phi''_{2F}}{\phi''_F} \rightarrow 0.$
\end{conjecture}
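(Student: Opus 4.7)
The plan is to use the reduction in Proposition~\ref{th:estimate_Lqcfinv} to bound $\|T\|_\infty$, and to treat $E_{{\rm qcf},F}$ as a finite-rank perturbation of the continuum operator $A_F I$, then invert by Sherman--Morrison--Woodbury. A direct inspection of the QCF stencil shows that it agrees with the QCL stencil $A_F I$ except in a fixed neighbourhood of the two interface indices $\pm K$, so using the explicit formula~\eqref{qcfvar2} one obtains a decomposition
\begin{equation*}
  E_{{\rm qcf},F} \;=\; A_F\,I \;+\; \phi_{2F}''\,R,
\end{equation*}
where $R\in\R^{2N\times 2N}$ is supported on rows and columns within a fixed distance of $\pm K$ and therefore has rank $r$ bounded by a constant independent of $N$ and $K$. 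Factoring $\phi_{2F}''R = UV^T$ with $U,V\in\R^{2N\times r}$ whose non-zero rows lie near the interfaces, the columns of $U$ and $V$ have $O(1)$ entries, uniformly in $N$.

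Setting $G_0 := (A_F\Ps_\Us + e\otimes e)^{-1} = A_F^{-1}\Ps_\Us + (2N)^{-2}\,e\otimes e$ and using $\Ps_\Us e = 0$, the identities $G_0\Ps_\Us = \Ps_\Us G_0 = A_F^{-1}\Ps_\Us$ hold, and the Woodbury formula applied to $A_F\Ps_\Us + e\otimes e + \Ps_\Us UV^T$ yields, after sandwiching by $\Ps_\Us$ on both sides,
\begin{equation*}
  T \;=\; A_F^{-1}\Ps_\Us \;-\; A_F^{-2}\,\Ps_\Us U\, S^{-1}\, V^T \Ps_\Us,
  \qquad S \;:=\; I_r \;+\; A_F^{-1}\,V^T\Ps_\Us U \;\in\; \R^{r\times r}.
\end{equation*}
Since $\Ps_\Us$ differs from the identity by a rank-one term of size $O(N^{-1})$ and $U,V$ have fixed compact support, the matrix $V^T\Ps_\Us U$ is an explicit $r\times r$ matrix determined solely by local interface geometry, up to an $O(N^{-1})$ correction. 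The first term in $T$ contributes at most $2/A_F$ to $\|T\|_\infty$; the second contributes $O\bigl(A_F^{-2}\|S^{-1}\|\bigr)$ uniformly in $N,K$ by H\"older's inequality, since the products $\Ps_\Us U$ and $V^T\Ps_\Us$ have bounded $\ell^\infty\to\ell^\infty$ norm.

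The main obstacle is controlling the small matrix $S^{-1}$ uniformly in $N$, and in particular pinning down exactly when $S$ becomes singular. The symmetry $\ell\mapsto -\ell$ should reduce the analysis to a single interface and block-diagonalise $S$ into two copies of a smaller matrix, after which one must compute the characteristic polynomial of $A_F I + V^T\Ps_\Us U$ explicitly from~\eqref{qcfvar2} and show that its only real root in the regime $F\ge r_*/2$ coincides with $A_F = \phi_F''+4\phi_{2F}''=0$. If this algebraic step goes through, the function
\begin{equation*}
  \eta(x) \;:=\; \sup_{N,K}\; x\,\|T\|_\infty
\end{equation*}
is well-defined, independent of $N$ and $K$ by the above Woodbury representation, and blows up exactly as $x\downarrow 0$, which together with Proposition~\ref{th:estimate_Lqcfinv} proves the conjecture.

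A secondary difficulty is to verify that the $O(N^{-1})$ projector corrections in $V^T\Ps_\Us U$ do not conspire with the $A_F^{-1}$ prefactor to spoil the $N$-independence of $\|S^{-1}\|$; this should follow because these corrections are uniform in $N$ once the leading rank-one piece of $\Ps_\Us$ is absorbed, and because the exact identity $\Ps_\Us G_0 = A_F^{-1}\Ps_\Us$ already eliminates the $e\otimes e$ contribution from the outer product $\Ps_\Us U S^{-1} V^T\Ps_\Us$. With those ingredients in place the proof would reduce to the finite-dimensional algebraic computation sketched above, entirely consistent with the numerical data in Figure~\ref{fig:U1inf_stab}.
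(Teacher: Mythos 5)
You should first be aware that the statement you were asked to prove is left as a \emph{conjecture} in the paper: the authors support it only by the numerical computation of $\|T\|_\infty$ in Figure~\ref{fig:U1inf_stab} (via Proposition~\ref{th:estimate_Lqcfinv}), and they even remark that the observed growth appears faster than $1/(\phi_F''+4\phi_{2F}'')$. So there is no proof in the paper to compare against, and a correct argument here would be a genuine contribution rather than a reconstruction.

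Your proposal, however, does not close the gap, and its central structural claim is false. From \eqref{qcfvar2}, $E_{{\rm qcf},F}u'_\ell=\sigma_\ell(u')-\alpha_{-K}(u')h_{\ell+K-1}+\alpha_K(u')h_{\ell-K-1}$, and in the \emph{entire} atomistic region $\sigma_\ell(u')=A_Fu'_\ell+\phi_{2F}''(u'_{\ell-1}-2u'_\ell+u'_{\ell+1})$. Thus $E_{{\rm qcf},F}-A_F I$ contains a tridiagonal second-difference block on all $\sim 2K$ atomistic rows (and $K$ may be proportional to $N$, e.g.\ $K=N/2$ in the paper's experiments), so it is not a perturbation of rank $r=O(1)$ supported near $\pm K$; moreover the genuinely rank-two part, built from $\alpha_{\pm K}$ and the heaviside vectors $h_{\cdot\mp K-1}$, has globally supported columns, not interface-supported ones. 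Consequently the Sherman--Morrison--Woodbury reduction as you set it up does not apply, and the conclusion that $V^T\Ps_\Us U$ is a fixed small matrix ``determined solely by local interface geometry'' fails. A repairable variant would take the base operator to be the full conjugated operator $u'\mapsto\sigma(u')$ (which is diagonally dominant with margin exactly $A_F$, giving $\|\sigma^{-1}\|_\infty\le 1/A_F$) and apply Woodbury only to the two heaviside terms; but then the whole difficulty is concentrated in showing that the resulting $2\times2$ capacitance matrix is invertible uniformly in $N$ and $K$, with degeneration only as $A_F\to0$ — precisely the step you defer with ``if this algebraic step goes through.'' Until that is done, the definition $\eta(x):=\sup_{N,K}x\|T\|_\infty$ is circular (finiteness of the supremum is the conjecture itself), and note also that the rank-one part of $\Ps_\Us$ is not ``of size $O(N^{-1})$'' in the relevant $\ell^\infty$ operator norm (its norm is $1$), though $\|\Ps_\Us\|_\infty\le2$ suffices for the crude bounds you need. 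In short: plausible strategy after correcting the splitting, but the decisive estimates are missing, so the conjecture remains unproven.
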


In fact, the numerical experiments suggest that $\| L_{{\rm
    qcf},F}^{-1} \|_{L(\Us^{-1,\infty},\ \Us^{1,\infty})}$ grows
faster than $\frac{1}{\phi''_F + 4 \phi''_{2F}},$ which would imply
that an estimate such as the one in Theorem~\ref{th:qcf:stability},
but with the constant $8$ replaced by $4$ would be false.

\subsection{The quasi-nonlocal coupling method}
\label{sec:qnl}
In preparation for the following section, where we introduce another
notion of stability for the QCF method, we review a popular
energy-based coupling method. In the next section, we will make
numerical comparisons between this method and the QCF method.

The quasi-nonlocal quasicontinuum approximation (QNL)
\cite{Shimokawa:2004} was derived as a modification of the
energy-based QC approximation~\cite{Ortiz:1995a} in order to correct
the inconsistency at the atomistic-to-continuum
interface~\cite{Shenoy:1999a, Dobson:2008a}. In the case of
next-nearest neighbour pair interaction, the QNL method can be
formulated as follows. Nearest neighbor interaction terms are left
unchanged. A next-nearest neighbor interaction term
$\phi(\eps^{-1}(y_{\ell+1}-y_{\ell-1}))$ is left unchanged if atom
$\ell$ belongs to the atomistic region, but is replaced by a
Cauchy--Born approximation
\begin{displaymath}
  \phi(\eps^{-1}(y_{\ell+1}-y_{\ell-1})) \approx
  \half\big[ \phi(2y_\ell') + \phi(2y_{\ell+1}')],
  \quad \text{if } \ell \in \Cs.
\end{displaymath}
This process yields the QNL energy functional
\begin{displaymath}
  \begin{split}
    \E_{\rm qnl}(y) =\eps \sum_{\ell = -N+1}^N  \phi(y_\ell')
    + \eps \sum_{\ell \in \As} \phi(y_\ell' + y_{\ell+1}')
    + \eps \sum_{\ell \in \Cs} \half\big[ \phi(2y_\ell')
    + \phi(2y_{\ell+1}')\big].
  \end{split}
\end{displaymath}
We remark that the QNL method is consistent for our next-nearest
neighbour pair interaction model, and in particular, $y_F$ is an
equilibrium of the QNL energy functional in the absence of external
forces. Moreover, in \cite{doblusort:qce.stab} we have established the
following sharp stability result for the QNL method, which shows
that the QNL method is predictive up to the limit load for fracture.

\begin{proposition}[Proposition 3 in \cite{doblusort:qce.stab}]
  \label{th:stab_qnl}
  Suppose that $F \geq r_* / 2$ and that $K \leq N-1$, then $\E_{\rm
    qnl}''(y_F)$ is positive definite in $\Us$ if and only if $A_F >
  0$.
\end{proposition}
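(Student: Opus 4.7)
The plan is to reduce the proposition to a simple algebraic identity relating $\E''_{\rm qnl}(y_F)$ to $\E''_{\rm qcl}(y_F)$, which the excerpt has already shown to equal $A_F\|u'\|^2_{\ell^2_\eps}$. First I would expand $\E''_{\rm qnl}(y_F)[u,u]$ termwise from the formula for $\E_{\rm qnl}$: the nearest-neighbour part contributes $\phi_F''\|u'\|^2_{\ell^2_\eps}$, the atomistic next-nearest-neighbour part contributes $\eps\phi_{2F}''\sum_{\ell\in\As}(u'_\ell+u'_{\ell+1})^2$, and the Cauchy--Born surrogate over $\Cs$ contributes $2\eps\phi_{2F}''\sum_{\ell\in\Cs}\bigl[(u'_\ell)^2+(u'_{\ell+1})^2\bigr]$. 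Using periodic reindexing (so that $\sum_\ell 2\bigl[(u'_\ell)^2+(u'_{\ell+1})^2\bigr]=4\sum_\ell(u'_\ell)^2$) together with the algebraic identity $(a+b)^2-2a^2-2b^2=-(a-b)^2$ applied inside the atomistic sum, the three pieces should collapse into the clean representation
\begin{equation*}
\E''_{\rm qnl}(y_F)[u,u] \;=\; A_F\|u'\|^2_{\ell^2_\eps} \;-\; \eps\phi_{2F}''\sum_{\ell\in\As}(u'_\ell-u'_{\ell+1})^2.
\end{equation*}

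The ``if'' direction then follows immediately from this identity. The standing hypothesis $F\ge r_*/2$ combined with property~(ii) of $\phi$ gives $\phi_{2F}''\le 0$, so the interfacial correction $-\eps\phi_{2F}''\sum_{\ell\in\As}(u'_\ell-u'_{\ell+1})^2$ is non-negative. Hence $\E''_{\rm qnl}(y_F)[u,u]\ge A_F\|u'\|^2_{\ell^2_\eps}$, and for any $u\in\Us\setminus\{0\}$ the backward difference $u'$ cannot vanish identically (since a zero-mean $2N$-periodic sequence with $u'\equiv 0$ is forced to be zero), so $A_F>0$ produces strict positivity.

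For the converse I would construct an explicit destabilising displacement that annihilates the correction. The natural choice is a piecewise-constant-strain test function: set $u'_\ell\equiv c$ for $\ell\in\{-K,\dots,K+1\}$ (which is exactly the index set appearing in the correction, so every difference $u'_\ell-u'_{\ell+1}$ there vanishes) and $u'_\ell\equiv c'$ on the complement, with $c,c'$ chosen to meet the zero-sum constraint $\sum_\ell u'_\ell=0$; one then recovers $u\in\Us$ by summation and a mean-zero shift. For this $u$ the identity reduces to $\E''_{\rm qnl}(y_F)[u,u]=A_F\|u'\|^2_{\ell^2_\eps}\le 0$ whenever $A_F\le 0$, ruling out positive definiteness. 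The main subtle point is that this construction yields a non-trivial $u$ only when the complement of $\{-K,\dots,K+1\}$ contains at least two indices, i.e.\ when $K\le N-2$; the extreme case $K=N-1$ is degenerate (the piecewise-constant mode collapses to zero) and requires a separate low-frequency perturbation argument to cover the marginal case $A_F=0$.
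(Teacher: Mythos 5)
Your identity
\(\E''_{\rm qnl}(y_F)[u,u] = A_F\|u'\|_{\ell^2_\eps}^2 - \eps\phi_{2F}''\sum_{\ell\in\As}(u'_\ell-u'_{\ell+1})^2\)
is correct, and the two directions you build on it (nonnegativity of the interfacial correction since \(\phi_{2F}''\le 0\) for \(F\ge r_*/2\), plus the piecewise-constant-strain test function that kills the correction) are exactly the standard argument; note that the present paper does not reprove this proposition but imports it from \cite{doblusort:qce.stab}, whose proof follows the same route. For \(2\le K\le N-2\) your proof is complete: the complement of \(\{-K,\dots,K+1\}\) in the strain indices has \(2N-2K-2\ge 2\) elements, so a nonzero zero-mean piecewise-constant \(u'\) exists, and then \(\E''_{\rm qnl}(y_F)[u,u]=A_F\|u'\|^2_{\ell^2_\eps}\le 0\) whenever \(A_F\le 0\).

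The gap is in your treatment of the extreme case \(K=N-1\). You correctly observe that the construction degenerates there, but the promised ``separate low-frequency perturbation argument'' cannot close the marginal case \(A_F=0\): for \(K=N-1\) the correction involves all \(2N-1\) consecutive bond pairs except the single wrap-around pair, so it vanishes only when \(u'\) is constant, hence (by zero mean) only when \(u=0\). Quantitatively, the path-graph Laplacian estimate gives, for \(\phi_{2F}''<0\),
\(\E''_{\rm qnl}(y_F)[u,u] \ge \big(A_F + (2-2\cos\smfrac{\pi}{2N})|\phi_{2F}''|\big)\|u'\|^2_{\ell^2_\eps}\),
so at \(A_F=0\) (and even for slightly negative \(A_F\)) the second variation is strictly positive on \(\Us\setminus\{0\}\); a smooth test function only produces \(A_F\|u'\|^2\) up to an \(O(N^{-2})\) positive correction and therefore cannot exhibit indefiniteness. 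In other words, the ``only if'' direction at fixed \(N\) genuinely requires the range \(K\le N-2\) in which your test function exists (or a uniform-in-\(N\) reading of positivity); you should either restrict the statement you prove accordingly or flag the \(K=N-1\), \(A_F=0\) combination as an exceptional case, rather than assert it can be repaired by a perturbation argument.
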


\subsection{Dynamical Stability}
\label{sec:qcf_ana:dynstab}
We have pointed out in Section \ref{sec:no_coercivity} that
operator stability for $L_{{\rm qcf},F}$ cannot guarantee that the
equilibrium $y_F$ is a stable equilibrium of the atomistic model
(e.g., a local minimum). To obtain at least a theoretical methodology
to determine stability of $y_F$ from the QCF operator alone, we
propose the notion of {\em dynamical stability}. The
dynamical system
\begin{equation*}
  \begin{split}
    \ddot u(t) &+ \Ps_\Us L_{{\rm qcf},F} u(t) = 0,\\
    u(0) &= u_0, \quad u'(0) = 0,
  \end{split}
\end{equation*}
has a unique solution $u \in \CC^\infty([0, +\infty);\, \Us)$. We call
this dynamical system {\em stable} if there exists a constant $C,$
independent of $N,$ such that
\begin{equation}
  \label{eq:qcf_ana:defn_dynstab}
  \| u(t) \|_{\ell^2_\eps} \leq C \| u_0 \|_{\ell^2_\eps} \qquad \forall t > 0,
\quad \forall u_0 \in \Us.
\end{equation}
This condition can be best understood in terms of the spectrum of
$\Ps_\Us L_{{\rm qcf},F}$. In numerical experiments, which are shown
in Table \ref{tbl:qcf_ana:conj_evals}, we have made the surprising
observation that $\Ps_\Us L_{{\rm qcf},F}$ and $\E''_{\rm qnl}(y_F)$
appear to have the same spectrum. This has led us to make the
following conjecture.

\begin{table}[t]
  \begin{equation*}
    \begin{array}{r|rrrrrrrr}
\textrm{N}&\phi_{2F}''= 0&\hbox{-}0.1&\hbox{-}0.15&\hbox{-}0.2&\hbox{-}0.25\\
\hline
 50&0
	&1.19e\hbox{-}010&9.93e\hbox{-}011&7.31e\hbox{-}011
	&6.64e\hbox{-}011\\
100&0
	&6.97e\hbox{-}010&6.19e\hbox{-}010&4.71e\hbox{-}010
	&3.16e\hbox{-}010\\
150&0
	&2.05e\hbox{-}009&1.83e\hbox{-}009&1.31e\hbox{-}009
	&1.23e\hbox{-}009\\
200&0
	&4.44e\hbox{-}009&3.12e\hbox{-}009&2.90e\hbox{-}009
	&2.10e\hbox{-}009\\
250&0
	&8.25e\hbox{-}009&6.38e\hbox{-}009&6.38e\hbox{-}009
	&3.96e\hbox{-}009\\
300&0
	&1.62e\hbox{-}008&1.15e\hbox{-}008&9.98e\hbox{-}009
	&8.86e\hbox{-}009\\
\end{array}
  \end{equation*}
  \caption{\label{tbl:qcf_ana:conj_evals}
    The spectra of $\Ps_\Us L_{{\rm qcf},F}$ and $\E_{\rm qnl}''(y_F)$ are computed for
    increasing $N$, for $K=N/2$, for $\phi_F'' = 1$, and for different
    values of $\phi_{2F}''$. The table displays the $\ell^2$ norm (not
    scaled by $\eps$) of the ordered vectors of eigenvalues. The column
    for $\phi''_{2F}=0$ is identically zero since, in this case, the two
    operators coincide.  All other entries are zero to numerical precision
    of the eigenvalue solver.
  }
  \vspace{-7mm}
\end{table}

\begin{conjecture}
  \label{samespec}
  For all $N \geq 4,\ 1 \leq K < N,\text{ and } F > 0,$ the operator
  $\Ps_\Us L_{{\rm qcf},F}$ is diagonalizable and its spectrum is
  identical to the spectrum of $\E''_{\rm qnl}(y_F)$.
\end{conjecture}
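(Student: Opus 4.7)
The strategy is to establish the stronger statement that $\Ps_\Us L_{{\rm qcf},F}$ is similar to $\E''_{\rm qnl}(y_F)$ as a linear operator on $\Us$. Since $\E''_{\rm qnl}(y_F)$ is a real symmetric matrix---hence orthogonally diagonalizable with real spectrum---such a similarity will immediately give both the diagonalizability of $\Ps_\Us L_{{\rm qcf},F}$ and the coincidence of the two spectra asserted by the conjecture.

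The concrete starting point is to compute the difference $D := L_{{\rm qcf},F} - \E''_{\rm qnl}(y_F)$ as a matrix. In the interior of $\As$ the two operators both reduce to the atomistic linearization of $\Fs_{\rm a}$, and in the interior of $\Cs$ both reduce to $A_F$ times the discrete Laplacian; consequently $D$ has non-trivial rows only at the four interface atoms $\pm K$ and $\pm(K+1)$. A direct expansion---of the same flavour as the evaluation of $\sum_\ell \Fs_{{\rm qcf},\ell}(y)$ in Section~\ref{sec:model:qcf}---yields, at the right interface,
\begin{equation*}
(Du)_K \,=\, -\phi''_{2F}\,u''_{K+1}, \qquad (Du)_{K+1} \,=\, \phi''_{2F}\,u''_{K},
\end{equation*}
with analogous formulas at the left interface. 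After applying the projection one obtains $\Ps_\Us L_{{\rm qcf},F} - \E''_{\rm qnl}(y_F) = \Ps_\Us D$, a perturbation of rank at most four, localized in a small neighbourhood of each interface.

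The principal plan is then to construct an invertible matrix $S$, also supported in a neighbourhood of the interfaces, with $S^{-1}\Ps_\Us L_{{\rm qcf},F}S = \E''_{\rm qnl}(y_F)$. Writing $S = I + T$ with $T$ small reduces this, at leading order, to the finite linear system $[\E''_{\rm qnl}(y_F),\,T] = \Ps_\Us D$, whose coefficients are explicit from the bulk stencil; the very special structure of the non-trivial rows of $D$---each one is $\pm\phi''_{2F}$ times a centred second difference on a bond adjacent to the interface---strongly suggests that a solution $T$ can be obtained by hand. Should the direct construction prove delicate, two fallback strategies are available. First, the matrix determinant lemma, applied to a rank factorization $\Ps_\Us D = BC^T$, reduces the desired equality of characteristic polynomials to the single identity $\det\bigl(I_r - C^T(\lambda I - \E''_{\rm qnl}(y_F))^{-1}B\bigr) \equiv 1$ in $\lambda$---a relation among only a handful of resolvent entries of $\E''_{\rm qnl}(y_F)$ near the interfaces. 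Second, since both operators admit a ``bond form'' $\<Lu,v\> = \<Eu',v'\>$ of the kind used in Proposition~\ref{th:estimate_Lqcfinv}, one can try instead to exhibit a similarity between the bond-space operators $E_{{\rm qcf},F}$ and $E_{{\rm qnl},F}$ and then transport it to $L_{{\rm qcf},F}$ and $\E''_{\rm qnl}(y_F)$ via the discrete-gradient factorization.

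The main obstacle common to all three approaches is to uncover the algebraic reason why a genuinely non-normal, rank-four perturbation of a symmetric matrix should nevertheless preserve the entire spectrum. The numerical evidence leaves little doubt that a hidden interface cancellation is responsible, but exposing it explicitly---whether as a commutator identity, as the resolvent identity above, or as a bond-space conjugation---appears to be the crux of any proof.
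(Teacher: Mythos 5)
There is nothing in the paper to compare your argument against: the statement you address is Conjecture \ref{samespec}, which the authors do not prove; it is supported only by the numerical experiments reported in Table \ref{tbl:qcf_ana:conj_evals} (and, at the time of writing, was left open). So the only question is whether your proposal itself constitutes a proof, and it does not. What you have written is a program with the decisive step missing, as you yourself concede in your last paragraph: the ``hidden interface cancellation'' you hope to expose \emph{is} the content of the conjecture, and none of your three routes establishes it.

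More concretely, the reduction to the commutator equation is not a legitimate simplification. The exact similarity condition $\Ps_\Us L_{{\rm qcf},F}(I+T)=(I+T)\,\E''_{\rm qnl}(y_F)$ reads $\E''_{\rm qnl}(y_F)T-T\,\E''_{\rm qnl}(y_F)=-\Ps_\Us D-\Ps_\Us D\,T$, and there is no small parameter allowing you to discard $\Ps_\Us D\,T$: the nonzero entries of $D$ are of size $|\phi''_{2F}|\eps^{-2}$, the same order as the operators themselves. Worse, even the linearized equation $[\E''_{\rm qnl}(y_F),T]=\Ps_\Us D$ is solvable only under the compatibility condition that $\<\Ps_\Us D\,v_j,v_j\>=0$ for every eigenvector $v_j$ of the symmetric operator $\E''_{\rm qnl}(y_F)$ (the range of a commutator with a symmetric matrix excludes its ``diagonal part'' in that eigenbasis), and this condition is essentially a first-order form of the spectral coincidence you are trying to prove; the reduction is therefore circular. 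The determinant-lemma route has the same character: the identity $\det\bigl(I_r-C^T(\lambda I-\E''_{\rm qnl}(y_F))^{-1}B\bigr)\equiv 1$ is just a restatement of equality of the characteristic polynomials, and you supply no computation of the required resolvent entries, which for the periodic, pentadiagonal QNL Hessian are not available without substantial further work. Note also that equality of spectra alone would not yield diagonalizability of the non-normal operator $\Ps_\Us L_{{\rm qcf},F}$, so even a completed resolvent argument would prove only half the claim. Your structural observations (the difference $\Ps_\Us L_{{\rm qcf},F}-\E''_{\rm qnl}(y_F)$ is low-rank and localized at the interfaces, modulo verification of the exact rows and entries, including the effect of the projection $\Ps_\Us$, which is \emph{not} localized) are a sensible starting point, but as it stands the proposal contains no proof, and the statement remains a conjecture.
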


\medskip \noindent Since $\E''_{\rm qnl}(y_F)$ is positive if and only
if $A_F > 0$ (cf. Proposition \ref{th:stab_qnl}), the validity of the
conjecture would imply that $L_{{\rm qcf},F}$ has positive real
eigenvalues if and only if $A_F > 0.$

To see how this observation implies dynamical stability
\eqref{eq:qcf_ana:defn_dynstab}, let $V$ denote the matrix whose
columns are the eigenvectors for $\Ps_\Us L_{{\rm qcf}, F}.$ Then $V$
has full rank and $V^{-1} \Ps_\Us L_{{\rm qcf},F} V$ is a diagonal matrix with
the eigenvalues of $L_{{\rm qcf},F}$ on its diagonal. If we define
$z(t) = V^{-1} u(t),$ then
\begin{equation*}
  \begin{split}
    \ddot\zb(t) + V^{-1} \Ps_\Us L_{{\rm qcf},F} V \zb(t) = 0, \\
    \zb(0) = V^{-1} u_0, \quad \zb'(0) = 0.
  \end{split}
\end{equation*}
The solution to the above system of equations is $z_j(t) = z_j(0)
\cos(\sqrt{\lambda_j} t)$ which clearly satisfies the bound $\|z(t)
\|_{\ell^2_\eps} \leq \| V^{-1} u_0\|_{\ell^2_\eps}$ for all $t$.
Thus, we can estimate
\begin{equation*}
  \begin{split}
    \| u(t) \|_{\ell^2_\eps}
    \leq~& \|V\|_{L(\ell^2_\eps,\ell^2_\eps)} \| V^{-1} u(t) \|_{\ell^2_\eps} \\
    \leq~& \|V\|_{L(\ell^2_\eps,\ell^2_\eps)} \| V^{-1} u_0 \|_{\ell^2_\eps} \\
    \leq~& \cond(V) \| u_0\|_{\ell^2_\eps},
  \end{split}
\end{equation*}
where the condition number of $V$ is defined as usual by
$\cond(V)=\|V\|\, \|V^{-1}\|.$ Hence, we see that, subject to the
validity of Conjecture \ref{samespec}, the $L_{{\rm qcf},F}$ operator
satisfies \eqref{eq:qcf_ana:defn_dynstab} with constant $C =
\cond(V)$. To make this stability independent of $N$, we require that
$\cond(V)$ is bounded as $N \to \infty$. This is the subject of
further numerical experiments displayed in Table
\ref{tbl:qcf_ana:condV}. They suggest that this is indeed true if and
only if $A_F > 0$.

\begin{table}[t]
\begin{equation*}
\begin{array}{r|rrrrrrrr}
\textrm{N}&\phi_{2F}''= 0&  -0.1& -0.15&  -0.2&  -0.24\\
\hline
  10&  1.00&1.4563&1.7607&2.3770&5.4398\\
  30&  1.00&1.5242&1.9441&2.8049&6.2876\\
  90&  1.00&1.5794&2.0537&3.0726&7.4324\\
 270&  1.00&1.6049&2.1095&3.1510&8.2878\\
 810&  1.00&1.6136&2.1191&3.2021&8.3863\\
2430&  1.00&1.6139&3.7477&3.2075&8.5968\\
\end{array}
  \end{equation*}
  \caption{\label{tbl:qcf_ana:condV} The table shows the
    condition number $\cond(V)$ for increasing values of $N$, for $K = N/2$, for $\phi_F'' = 1$,
    and for different values of $\phi''_{2F}.$  In each column,
    the computed condition numbers appear to approach an upper bound.
  }
  \vspace{-7mm}
\end{table}

\begin{conjecture}
  \label{uniformcond}
  Let $V$ denote the matrix of eigenvectors for the force-based
  QC operator $\Ps_\Us L_{{\rm qcf},F}.$ If $A_F > 0,$ then
  $\cond(V)$ is uniformly bounded in $N$.
\end{conjecture}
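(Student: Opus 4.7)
The plan is to leverage Conjecture \ref{samespec}, which asserts that $\Ps_\Us L_{{\rm qcf},F}$ and the symmetric positive definite operator $\E''_{\rm qnl}(y_F)$ share the same spectrum when $A_F > 0$. The cleanest route is to construct an explicit invertible matrix $S \in \R^{2N \times 2N}$, preserving $\Us$, such that
\[
\Ps_\Us L_{{\rm qcf},F}\, S \;=\; S\, \E''_{\rm qnl}(y_F),
\]
together with the bound $\cond(S) \leq C$ for some $C$ independent of $N$. If this is achieved, then writing $\E''_{\rm qnl}(y_F) = Q \Lambda Q^T$ with $Q$ orthogonal (it is symmetric positive definite by Proposition \ref{th:stab_qnl}) yields $V = SQ$ as an eigenvector matrix for $\Ps_\Us L_{{\rm qcf},F}$, whence $\cond(V) \leq \cond(S)\,\cond(Q) = \cond(S) \leq C$, which is exactly Conjecture \ref{uniformcond}.

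The natural candidate for $S$ exploits the observation that $\Ps_\Us L_{{\rm qcf},F}$ and $\E''_{\rm qnl}(y_F)$ coincide row by row away from the interfaces at $\ell = \pm K$: they differ only in a fixed-width neighborhood of $\{-K, K\}$, and the difference is of size $O(|\phi_{2F}''|)$. This suggests $S = I + R$ with $R$ supported on an $N$-independent set of indices near the interface; the intertwining identity then reduces to a small linear system for the entries of $R$ whose size and solvability depend only on $\phi_F''$, $\phi_{2F}''$, and the condition $A_F > 0$, not on $N$. A parallel, more variational route is to seek a symmetric positive definite $B$ (again a localized perturbation of $I$) with $B\,\Ps_\Us L_{{\rm qcf},F}$ symmetric, so that eigenvectors are orthogonal in the inner product $[u,v] := \langle B u, v\rangle$ and one needs only to bound $\cond(B)$ uniformly in $N$.

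The main obstacle is that the localized perturbation $R$ (equivalently $B - I$) must not couple resonantly to the global Fourier-like bulk modes of $\E''_{\rm qnl}(y_F)$. A single bulk eigenvector could, in principle, pick up interface contributions that accumulate across the $O(N)$ eigenvectors and spoil the conditioning of $V$, even when each individual contribution is small. Controlling this requires a quantitative decay estimate: the interface-modified columns of $S$ must vanish away from $\pm K$ at a rate that is uniform in the bulk wavenumber $k$. I expect this to follow from the smooth, non-degenerate dependence of the QNL symbol on $k$ under the stability condition $A_F > 0$, combined with a Neumann-series construction of $S$ that treats $\phi_{2F}''$-dependent interface terms as a perturbation of the nearest-neighbour tridiagonal part. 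Verifying that no resonance occurs for any $k$ in the admissible range, and that the saturation visible in Table \ref{tbl:qcf_ana:condV} is genuinely reproduced by the estimate, will be the analytical crux.
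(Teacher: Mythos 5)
There is no proof in the paper to compare against: Conjecture \ref{uniformcond} is stated as an open conjecture, supported only by the numerical experiments of Table \ref{tbl:qcf_ana:condV} (and it is conditioned on Conjecture \ref{samespec}, which is likewise only supported by Table \ref{tbl:qcf_ana:conj_evals}). Your proposal is therefore not being measured against a known argument, and on its own terms it is a programme rather than a proof: the two decisive steps --- the actual construction of the intertwiner $S$ with $\Ps_\Us L_{{\rm qcf},F}\,S = S\,\E''_{\rm qnl}(y_F)$, and the uniform-in-$N$ bound on $\cond(S)$ --- are exactly the content of the conjecture (an eigenvector matrix with uniformly bounded condition number exists if and only if such an $S$ exists), so deferring them to ``a small linear system'' and ``a quantitative decay estimate'' leaves the essential difficulty untouched.

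Two concrete obstacles deserve mention. First, writing $S=I+R$ with $R$ supported near the interfaces turns the intertwining identity into a Sylvester-type equation $R\,\E''_{\rm qnl}(y_F)-\Ps_\Us L_{{\rm qcf},F}\,R = \E''_{\rm qnl}(y_F)-\Ps_\Us L_{{\rm qcf},F}$, and this Sylvester operator is \emph{singular} precisely because (granting Conjecture \ref{samespec}) the two operators share their spectrum; so solvability within a localized ansatz is not a finite, $N$-independent linear-algebra fact but a genuine resonance question, which is the crux you acknowledge but do not resolve. Second, the right-hand side is not purely local: besides the $O(1)$ interface rows, the projection $\Ps_\Us$ contributes a rank-one term of the form $e\otimes(\text{interface functional})$, and in the conjugate representation of Lemma \ref{th:qcf_rep} the interface terms couple to the \emph{global} sawtooth $h$; any localized-$R$ or Neumann-series construction must control how these global components distribute over all $O(N)$ bulk eigenvectors, which is where the conditioning could in principle degrade (note that the data in Table \ref{tbl:qcf_ana:condV} show $\cond(V)$ growing without an evident bound as $A_F\to 0$, so the estimate cannot be uniform in $F$ either). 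In short, the approach is a reasonable heuristic consistent with the paper's numerics, but it does not close the gap; the statement remains, as in the paper, a conjecture.
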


\medskip Conjectures~\ref{samespec} and \ref{uniformcond}, supported
by the results of the numerical experiments that we have presented in
Tables \ref{tbl:qcf_ana:conj_evals} and \ref{tbl:qcf_ana:condV}, imply
that $\Ps_\Us L_{{\rm qcf},F}$ is indeed dynamically stable for
$A_F>0,$ with a stability constant that is uniform in $N$.

\section*{Conclusion}

We propose that a sharp stability analysis of atomistic-to-continuum
coupling methods is an essential ingredient for the evaluation of
their predictive capability, as important as a sharp consistency
analysis. In the present paper, we have established such a sharp
stability analysis for the force-based QC method, for a simple
one-dimensional model problem. We have analyzed three notions of
stability:
\begin{enumerate}
  \ilist
\item {\em Positivity} (coercivity) is generically not satisfied.

\item {\em Operator stability}, uniformly in the size of the atomistic
  system, holds only with an appropriate choice of function spaces. It
  does not hold for several natural choices.

\item {\em Dynamical stability} is satisfied up to the critical
  load. This result is based on the numerical observation that the
  spectra of the QCF and QNL operators coincide.
\end{enumerate}

Positivity and dynamical stability
are equivalent for energy-based methods,
and under suitable conditions and choices of function spaces they imply
operator stability. However,
the fact that the QCF method is non-conservative and gives rise to
non-normal operators, leads to a much richer mathematical structure.

Finally, we stress once again that, while the QCF method is possibly
the simplest force-based multi-physics coupling scheme, we believe
that similar observations can be made for other force-based hybrid
methods, such as FeAt \cite{kohlhoff}, CADD \cite{cadd} or {\em brutal
  force mixing} \cite{hybrid_review}.

\appendix

\section{Proof of Theorem~\ref{th:qcf:stability}: Stability of
  $L_{{\rm qcf}, F}$}
\label{sec:qcf_stab}
Theorem~\ref{th:qcf:stability} states that, if $\phi_F'' + 8
\phi_{2F}''>0$, then $L_{{\rm qcf},F}$ is stable as an operator from
$\Us^{1,\infty}$ to $\Us^{-1,\infty}$, uniformly in $N$.

The proof of this statement uses a variational representation for the
QCF operator, which we derived in \cite{dobs-qcf2}, and which is also
valid for periodic boundary conditions:
\begin{displaymath}
  L_{{\rm qcf},\,F} = \phi_F'' L_1 + \phi_{2F}'' (L_2^{\rm reg}+L_2^{\rm sng}),
\end{displaymath}
where the three operators $L_1,
L_2^{\rm reg}, L_2^{\rm sng} : \Us \to \Us^*$ are given by
\begin{align*}
  \< L_1 u, v\> =~& \< u', v' \>, \\
  \< L_2^{\rm reg} u, v \> =~& \eps\!\!\! \sum_{\ell = -N+1}^{-K} 4 u_\ell' v_\ell'
  + \eps\!\!\! \sum_{\ell = -K+1}^K (u_{\ell-1}'+2u_\ell'+u_{\ell+1}') v_\ell'
  + \eps\!\!\! \sum_{\ell = K+1}^N 4 u_\ell' v_\ell', \\
  \< L_2^{\rm sng} u, v\> =~& (u_{-K+1}' - 2 u_{-K}' + u_{-K-1}') v_{-K}
  - (u_{K+2}' - 2 u_{K+1}' + u_K') v_K.
\end{align*}
We omit the proof of this representation which is a straightforward
summation by parts argument and carries over verbatim from
\cite{dobs-qcf2}. Upon defining
\begin{displaymath}
  \sigma_\ell(u') = \cases{
    \phi_F''u_\ell' + \phi_{2F}''(u_{\ell-1}'+2u_\ell' + u_{\ell+1}'), & \ell = -K+1,\dots,K, \\
    (\phi_F'' + 4 \phi_{2F}'') u_\ell', & \text{otherwise},
  }
\end{displaymath}
as well as
\begin{equation}\label{alpha}
\begin{split}
  \alpha_K(u') =~& \phi_{2F}''(u_{K+2}' - 2 u_{K+1}' + u_{K}'),
  \quad \text{and} \\
  \alpha_{-K}(u') =~& \phi_{2F}''(u_{-K+1}' - 2u_{-K}' + u_{-K-1}'),
  \end{split}
\end{equation}
we can rewrite this representation as
\begin{displaymath}
  \< L_{{\rm qcf},\,F} u, v \>
  = \< \sigma(u'), v' \> + \alpha_{-K}(u') v_{-K} - \alpha_K(u') v_K.
\end{displaymath}

Using the periodic heaviside function $h\in\Us$ given by
\begin{equation}
  \label{eq:heaviside}
  h_\ell = \cases{
    \hphantom{-} \frac12 (1-\eps\ell) -\smfrac{\eps}{4}, & \ell \geq 0, \\
    -  \frac12 (1+\eps\ell) -\smfrac{\eps}{4}, & \ell < 0,}
\end{equation}
and setting $\tilde h_\ell = h_{\ell-1}$, the point evaluation
functional $v \mapsto v_0$, $v \in \Us$, can be represented by
\begin{displaymath}
  v_0 = \< h', v \> = - \< \tilde h, v'\> \qquad
  \text{for all } v \in \Us.
\end{displaymath}
Combining these observations, we obtain the following result.

\begin{lemma}
  \label{th:qcf_rep}
  $L_{{\rm qcf},F}$ can be written as
  \begin{equation}
    \label{qcfvar1}
    \< L_{{\rm qcf},F} u, v \> = \< E_{{\rm qcf},F} u', v'\>
    \qquad \text{for all } u,v \in \Us,
  \end{equation}
  where
  \begin{equation}
    \label{qcfvar2}
    E_{{\rm qcf},F} u'_\ell = \sigma_\ell(u')
    - \alpha_{-K}(u') h_{\ell+K-1}
    + \alpha_K(u') h_{\ell-K-1},
  \end{equation}
  for $\sigma, h,$ and $\alpha_{\pm K}$ as defined above.
\end{lemma}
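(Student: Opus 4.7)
My plan is to take the representation
\[
\< L_{{\rm qcf},F} u, v \> = \< \sigma(u'), v' \> + \alpha_{-K}(u') v_{-K} - \alpha_K(u') v_K
\]
already displayed above the lemma as the starting point, and simply to rewrite the two point evaluations $v_{-K}$ and $v_K$ as inner products of $v'$ with translates of the periodic Heaviside $h$. Since everything is $2N$-periodic and mean-zero, the only tool I need beyond this representation is the identity
\[
v_m = -\big\langle h_{\cdot - m - 1},\, v' \big\rangle \qquad \text{for all } v \in \Us, \ m \in \Z,
\]
which is the translated version of the stated identity $v_0 = \<h', v\> = -\<\tilde h, v'\>$ (with $\tilde h_\ell = h_{\ell - 1}$).

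The first step is to verify this translated identity. Translation invariance of the periodic inner product gives $\langle h_{\cdot - m - 1}, v' \rangle = \langle \tilde h, v'(\cdot + m) \rangle$, and applying $v_0 = -\langle \tilde h, v' \rangle$ to the shifted displacement (which also lies in $\Us$) yields exactly $v_m$. Alternatively one can just do a direct summation by parts on $\eps \sum_\ell h'_{\ell - m} v_\ell$, using that the discrete derivative of the shifted Heaviside is the shifted Kronecker mass (with appropriate subtraction to enforce zero mean on $\Us$), and this mean subtraction is precisely absorbed by the constant shift $-\eps/4$ built into the definition of $h$.

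The second step is to substitute $m = -K$ and $m = K$. This produces
\[
\alpha_{-K}(u')\, v_{-K} = -\,\alpha_{-K}(u')\,\big\langle h_{\cdot + K - 1},\, v' \big\rangle
\quad\text{and}\quad
-\,\alpha_K(u')\, v_K = \alpha_K(u')\,\big\langle h_{\cdot - K - 1},\, v' \big\rangle.
\]
Since $\alpha_{\pm K}(u')$ are scalars (depending only on the four values $u'_{\pm K - 1}, u'_{\pm K}, u'_{\pm K + 1}, u'_{\pm K + 2}$), they pull inside the inner product, and adding these two terms to $\langle \sigma(u'), v'\rangle$ yields
\[
\<L_{{\rm qcf},F} u, v\> = \Big\langle \sigma(u') - \alpha_{-K}(u')\, h_{\cdot + K - 1} + \alpha_K(u')\, h_{\cdot - K - 1},\, v'\Big\rangle,
\]
which is exactly $\<E_{{\rm qcf},F} u', v'\>$ for $E_{{\rm qcf},F} u'$ defined by \eqref{qcfvar2}.

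The only real obstacle is the bookkeeping: one must check carefully that the constant $-\eps/4$ in \eqref{eq:heaviside} is tuned so that $h \in \Us$ (zero mean), and that this precise normalization is what makes $v_0 = -\langle \tilde h, v'\rangle$ hold exactly on the mean-zero subspace rather than only up to a constant. Once that identity is in hand, the lemma is a one-line consequence of translating the index and substituting into the already-known boundary representation of $L_{{\rm qcf},F}$. No new estimate or coercivity argument is needed.
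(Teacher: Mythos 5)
Your proposal is correct and is essentially the paper's own argument: the paper obtains the lemma precisely by combining the displayed boundary representation $\< L_{{\rm qcf},F} u, v\> = \<\sigma(u'),v'\> + \alpha_{-K}(u')v_{-K} - \alpha_K(u')v_K$ with the Heaviside point-evaluation identity, and your shift bookkeeping ($v_m = -\<h_{\cdot-m-1},v'\>$, then $m=\pm K$) reproduces \eqref{qcfvar2} with the right signs. One harmless quibble: the constant $-\eps/4$ in \eqref{eq:heaviside} is not what makes $v_0=-\<\tilde h,v'\>$ exact (that follows because $v$ and $v'$ have zero mean, so additive constants in $h$ drop out); it only normalizes $h$ to lie in $\Us$, which is used later in Lemma \ref{th:qppqcf:lemma_z}.
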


Even though the variational representations of the Dirichlet case and
the periodic case are the same, we cannot translate the proof for
inf-sup stability that we used in \cite{dobs-qcf2}, as it required a
matrix representation that is unavailable for periodic boundary
conditions. Instead, we will compute a fairly explicit characterization
of $L_{{\rm qcf},\,F}^{-1}$ to estimate its norm directly. It is
most convenient to do so if we define an equivalent norm on
$\Us^{-1,\infty}$. Note that $L_1 : \Us \to \Us^*$ is bijective, and
hence we can define
\begin{displaymath}
  \| g \|_{\tilde \Us^{-1,\infty}} = \| L_1^{-1} g \|_{\Us^{1,\infty}}
  \qquad \text{for } g \in \Us^*.
\end{displaymath}

\begin{lemma}
  \label{th:alt_neg_norm}
  For all $g \in \Us^*,$ it holds that
  \begin{displaymath}
  \frac 12\| g \|_{\tilde \Us^{-1,\infty}}
  \leq \| g \|_{\Us^{-1,\infty}}
  \leq \| g \|_{\tilde \Us^{-1,\infty}}.
  \end{displaymath}
\end{lemma}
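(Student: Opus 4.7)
The plan is to reduce both norms to norms of a single auxiliary vector and then invoke the already-established equivalence \eqref{eq:norm_equiv}. Set $u = L_1^{-1} g \in \Us$. Since $u$ is $2N$-periodic, its backward difference satisfies $\sum_{\ell=-N+1}^{N} u'_\ell = \eps^{-1}(u_N - u_{-N}) = 0$, so $u'$ itself can be viewed as an element of $\Us$ (after the usual identification). By the definition of $L_1$, for every $v \in \Us$ we have $\langle g, v\rangle = \langle L_1 u, v\rangle = \langle u', v'\rangle$, and by the definition of $\|\cdot\|_{\tilde\Us^{-1,\infty}}$ we have $\|g\|_{\tilde\Us^{-1,\infty}} = \|u\|_{\Us^{1,\infty}} = \|u'\|_{\ell^\infty_\eps}$.

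Next, I would rewrite $\|g\|_{\Us^{-1,\infty}}$ as a supremum over difference fields. From the above identity,
\begin{displaymath}
  \|g\|_{\Us^{-1,\infty}}
  = \sup_{\substack{v\in\Us\\ \|v'\|_{\ell^1_\eps}=1}} \langle u', v'\rangle.
\end{displaymath}
The key observation is that the map $v \mapsto v'$ is a bijection from $\Us$ onto the subspace of zero-mean vectors in $\R^{2N}$: given any $w \in \R^{2N}$ with $\sum w_j = 0$, one recovers $v$ uniquely (up to an additive constant which is then fixed by the zero-mean condition on $v$) via partial sums, and the consistency of the reconstruction at the period boundary is exactly $\sum w_j = 0$. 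Under this identification, the supremum above is precisely $\|u'\|_{\Us^{-0,\infty}}$.

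Finally, I apply the norm equivalence \eqref{eq:norm_equiv}, which was proved in the text for arbitrary elements of $\Us$, to the element $u' \in \Us$ with $p=\infty$:
\begin{displaymath}
  \|u'\|_{\Us^{-0,\infty}}
  \leq \|u'\|_{\ell^\infty_\eps}
  \leq 2 \, \|u'\|_{\Us^{-0,\infty}}.
\end{displaymath}
Substituting the identifications $\|u'\|_{\ell^\infty_\eps} = \|g\|_{\tilde\Us^{-1,\infty}}$ and $\|u'\|_{\Us^{-0,\infty}} = \|g\|_{\Us^{-1,\infty}}$, the left inequality yields $\|g\|_{\Us^{-1,\infty}} \leq \|g\|_{\tilde\Us^{-1,\infty}}$ and the right inequality yields $\tfrac12\|g\|_{\tilde\Us^{-1,\infty}} \leq \|g\|_{\Us^{-1,\infty}}$, completing the proof.

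There is no real obstacle here: the argument is essentially bookkeeping, and all the analytical content is already packaged in \eqref{eq:norm_equiv}. The only point requiring a moment of care is the verification that the range of $v \mapsto v'$ over $v \in \Us$ is exactly the zero-mean subspace of $\R^{2N}$, so that the $\Us^{-0,\infty}$ norm of $u'$ coincides with the supremum appearing in $\|g\|_{\Us^{-1,\infty}}$ rather than being taken over a strictly smaller or larger set of test vectors.
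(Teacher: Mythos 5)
Your proof is correct, but it takes a different route from the one in the paper. The paper proves the upper bound $\|g\|_{\Us^{-1,\infty}} \leq \|g\|_{\tilde\Us^{-1,\infty}}$ by H\"older's inequality and then establishes the lower bound directly through the auxiliary estimate \eqref{sign}, which is proved by constructing an explicit two-spike test function ($v'_{\ell_1} = \frac{1}{2\eps}$, $v'_{\ell_2} = -\frac{1}{2\eps}$ at an index where $z'$ attains its maximum and an index where it is negative); the factor $\frac12$ comes from splitting the mass of the test gradient between two sites. You instead observe that $v \mapsto v'$ is a bijection from $\Us$ onto the zero-mean vectors, so that $\|g\|_{\Us^{-1,\infty}} = \|u'\|_{\Us^{-0,\infty}}$ with $u = L_1^{-1}g$, and then both inequalities drop out of the previously established equivalence \eqref{eq:norm_equiv} applied to $u' \in \Us$ with $p=\infty$; there the factor $\frac12$ comes from subtracting the mean of the test vector rather than from a spike construction. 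Both arguments are sound and give identical constants: yours is shorter and reuses machinery already in the paper (the gradient characterization you verify is exactly the one invoked later in the proof of Lemma~\ref{th:qppqcf:lemma_z}), while the paper's is self-contained at that point and makes explicit which test functions nearly saturate the negative norm. The one point you rightly flag --- that the range of $v\mapsto v'$ over $\Us$ is exactly the zero-mean subspace, so the supremum defining $\|g\|_{\Us^{-1,\infty}}$ is taken over precisely the test set defining $\|u'\|_{\Us^{-0,\infty}}$ --- is handled correctly, since periodicity gives $\sum_\ell v'_\ell = 0$ and, conversely, any zero-mean $w$ is reconstructed as $w = v'$ for a unique $v\in\Us$.
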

\begin{proof}
  Let $z = L_1^{-1} g$, that is,
  \begin{displaymath}
    \< z', v' \> = \< g, v \> \qquad \forall v \in \Us.
  \end{displaymath}
  Taking the supremum over $v$ with $\|v'\|_{\ell^1_\eps} = 1$ we
  obtain the second inequality
  \begin{displaymath}
    \|g\|_{\Us^{-1,\infty}} \leq \|z'\|_{\ell^\infty_\eps} = \|g\|_{\tilde\Us^{-1,\infty}}
  \end{displaymath}
  by H\"{o}lder's inequality.

  The first inequality follows from the fact, which is proved below,
  that
  \begin{equation}\label{sign}
    \smfrac{1}{2}\|z'\|_{\ell^\infty_\eps}
    \leq \sup_{\substack{v \in \Us \\ \|v'\|_{\ell^1_\eps} = 1}}
    \< z', v'\> \qquad \forall z\in\Us.
  \end{equation}
  Namely, this implies that
  \begin{displaymath}
   \smfrac{1}{2}\| g \|_{\tilde \Us^{-1,\infty}}
   = \smfrac{1}{2} \|z'\|_{\ell^\infty_\eps}
   \leq \sup_{\substack{v \in \Us \\ \|v'\|_{\ell^1_\eps} = 1}} \< z', v'\>
   = \sup_{\substack{v \in \Us \\ \|v'\|_{\ell^1_\eps} = 1}} \< g, v \>
   = \|g\|_{\Us^{-1,\infty}}.
  \end{displaymath}

  To prove \eqref{sign}, we fix $z\in\Us$ and let $\ell_1, \ell_2$ be
  such that $z'_{\ell_1}=\|z'\|_{\ell^\infty_\eps}$ and
  $z'_{\ell_2}<0$. (A similar argument can be used if
  $z'_{\ell_1}=-\|z'\|_{\ell^\infty_\eps}$.) We obtain \eqref{sign}
  from the fact that $\smfrac12 \|z'\|_{\ell^\infty_\eps} \leq \< z', v'
  \>$ where $v\in\Us$ is defined by
  \begin{displaymath}
    v'_\ell=\cases{
      \smfrac 1{2\eps}&\text{ if }\ell=\ell_1,\\
      -\smfrac 1{2\eps}&\text{ if }\ell=\ell_2,\\
      0&\text{ otherwise.}
    }
    \qedhere
  \end{displaymath}
\end{proof}

\begin{corollary}
  \label{th:Lqcfinv_and_L1invLqcf}
  Suppose that $F$ is such that $L_{{\rm qcf}, F} : \Us \to \Us^*$ is
  invertible, then
  \begin{displaymath}
    \begin{split}
      \| (L_1^{-1} L_{{\rm qcf},F})^{-1} \|_{L(\Us^{1,\infty},\ \Us^{1,\infty})}
      \leq~& \| L_{{\rm qcf},F}^{-1} \|_{L(\Us^{-1,\infty},\ \Us^{1,\infty})}  \\
      \leq~& 2 \| (L_1^{-1} L_{{\rm qcf},F})^{-1} \|_{
        L(\Us^{1,\infty}, \ \Us^{1, \infty})}.
    \end{split}
  \end{displaymath}
\end{corollary}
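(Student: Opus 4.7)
The plan is to reduce the claim to Lemma \ref{th:alt_neg_norm} by first rewriting the middle quantity in terms of the auxiliary negative norm $\|\cdot\|_{\tilde\Us^{-1,\infty}}=\|L_1^{-1}\cdot\|_{\Us^{1,\infty}}$, and then invoking the norm equivalence to translate between $\Us^{-1,\infty}$ and $\tilde\Us^{-1,\infty}$.

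First, I would observe that, since $L_1 : \Us \to \Us^*$ is bijective, the change of variables $u = L_1^{-1} g$ (equivalently $g = L_1 u$) is a bijection on the appropriate spaces. Using the defining identity $\|g\|_{\tilde\Us^{-1,\infty}} = \|L_1^{-1}g\|_{\Us^{1,\infty}} = \|u\|_{\Us^{1,\infty}}$, one obtains
\begin{equation*}
\|L_{{\rm qcf},F}^{-1}\|_{L(\tilde\Us^{-1,\infty},\,\Us^{1,\infty})}
= \sup_{g\neq 0}\frac{\|L_{{\rm qcf},F}^{-1} g\|_{\Us^{1,\infty}}}{\|L_1^{-1} g\|_{\Us^{1,\infty}}}
= \sup_{u\neq 0}\frac{\|L_{{\rm qcf},F}^{-1} L_1 u\|_{\Us^{1,\infty}}}{\|u\|_{\Us^{1,\infty}}}
= \|(L_1^{-1} L_{{\rm qcf},F})^{-1}\|_{L(\Us^{1,\infty},\,\Us^{1,\infty})},
\end{equation*}
where in the last equality I use that $(L_1^{-1}L_{{\rm qcf},F})^{-1} = L_{{\rm qcf},F}^{-1} L_1$ on $\Us$.

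Next I would apply Lemma \ref{th:alt_neg_norm}, which asserts that $\tfrac12\|g\|_{\tilde\Us^{-1,\infty}} \le \|g\|_{\Us^{-1,\infty}} \le \|g\|_{\tilde\Us^{-1,\infty}}$ for all $g\in\Us^*$. Using these sandwich inequalities directly in the definition of the operator norm gives
\begin{equation*}
\|L_{{\rm qcf},F}^{-1}\|_{L(\tilde\Us^{-1,\infty},\,\Us^{1,\infty})}
\;\le\; \|L_{{\rm qcf},F}^{-1}\|_{L(\Us^{-1,\infty},\,\Us^{1,\infty})}
\;\le\; 2\,\|L_{{\rm qcf},F}^{-1}\|_{L(\tilde\Us^{-1,\infty},\,\Us^{1,\infty})},
\end{equation*}
since a larger denominator norm produces a smaller ratio and vice versa. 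Combining with the identity from the first step yields the two bounds in the statement.

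There is no real obstacle here: the argument is essentially a bookkeeping exercise, with the only technical content sitting in Lemma \ref{th:alt_neg_norm}, which is assumed. The point of the corollary is conceptual rather than technical: it reduces bounding $\|L_{{\rm qcf},F}^{-1}\|_{L(\Us^{-1,\infty},\Us^{1,\infty})}$ (a quantity involving a negative norm that is hard to compute with directly) to bounding $\|(L_1^{-1}L_{{\rm qcf},F})^{-1}\|_{L(\Us^{1,\infty},\Us^{1,\infty})}$, i.e.\ the $\Us^{1,\infty}\to\Us^{1,\infty}$ norm of the inverse of the concrete operator $L_1^{-1}L_{{\rm qcf},F}$. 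This is the form that will be amenable to the explicit analysis via the variational representation in Lemma \ref{th:qcf_rep} when proving Theorem \ref{th:qcf:stability}.
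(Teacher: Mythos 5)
Your proposal is correct and follows essentially the same route as the paper: both arguments hinge on Lemma \ref{th:alt_neg_norm} together with the identity $\|L_{{\rm qcf},F}u\|_{\tilde\Us^{-1,\infty}}=\|L_1^{-1}L_{{\rm qcf},F}u\|_{\Us^{1,\infty}}$, the only difference being that you phrase it as a change of variables in the supremum defining the norm of the inverse, while the paper works with the equivalent infimum $\inf_{\|u\|_{\Us^{1,\infty}}=1}\|L_{{\rm qcf},F}u\|$ of the forward operator. This is a purely cosmetic difference, so no further comparison is needed.
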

\begin{proof}
  Using Lemma \ref{th:alt_neg_norm} twice, we can prove the following
  bound,
  \begin{align*}
    \frac{1}{
      \| (L_1^{-1} L_{{\rm qcf}, F})^{-1} \|_{L(\Us^{1,\infty},\ \Us^{1,\infty})}}
    =~& \!\inf_{\substack{u \in \Us \\ \|u\|_{\Us^{1,\infty}} = 1}} \!
    \| L_1^{-1} L_{{\rm qcf}, F} u \|_{\Us^{1,\infty}}
    = \!\inf_{\substack{u \in \Us \\ \|u\|_{\Us^{1,\infty}} = 1}} \!
    \| L_{{\rm qcf}, F} u \|_{\tilde \Us^{-1,\infty}} \\
    \geq~& \!\inf_{\substack{u \in \Us \\ \|u\|_{\Us^{1,\infty}} = 1}} \!
    \| L_1^{-1} L_{{\rm qcf}, F} u \|_{\Us^{1,\infty}}
    =     \frac{1}{
      \| L_{{\rm qcf}, F}^{-1} \|_{L(\Us^{-1,\infty},\ \Us^{1,\infty})}},
  \end{align*}
  which gives the first stated inequality. The second inequality
  follows from a similar argument.
\end{proof}

Corollary \ref{th:Lqcfinv_and_L1invLqcf} shows that we can bound the
operator norm $\| L_{{\rm qcf},\,F}^{-1} \|_{L(\Us^{-1,\infty},\
  \Us^{1,\infty})}$ in terms of $\| (L_1^{-1} L_{{\rm qcf},\,F})^{-1}
\|_{L(\Us^{1,\infty},\ \Us^{1,\infty})}$. The latter operator norm can
be computed using the formula
\begin{equation}
  \label{eq:L1invLqcf_inv}
  \| (L_1^{-1} L_{{\rm qcf}, F})^{-1} \|_{L(\Us^{1,\infty},\ \Us^{1,\infty})}
  = \Big\{\inf_{\substack{u \in \Us \\ \|u'\|_{\ell^\infty_\eps} = 1}}
  \| (L_1^{-1}L_{{\rm qcf}, F} u)' \|_{\ell^\infty_\eps} \Big\}^{-1}.
\end{equation}
In the next lemma, we establish an explicit representation of
$L_1^{-1} L_{{\rm qcf},\,F}$ which will subsequently allow us to
construct upper and lower bounds for \eqref{eq:L1invLqcf_inv}.

\begin{lemma}
  \label{th:qppqcf:lemma_z}
  Let $z = L_1^{-1} L_{{\rm qcf},\,F} u$, then
  \begin{align*}
    z_\ell' =~& \sigma_\ell(u')
    - \frac{\eps}{2} \phi_{2F}'' \big\{ u_{-K}' - u_{-K+1}'
    - u_{K}' + u_{K+1}' \big\} \\
    \notag
    & - \alpha_{-K}(u') h_{\ell+K-1} + \alpha_K(u') h_{\ell-K-1},
  \end{align*}
  where $\sigma, h,$ and $\alpha_{\pm K}$ are defined above.
\end{lemma}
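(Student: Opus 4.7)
\medskip\noindent\textbf{Proof proposal.}
The plan is to combine the variational representation from Lemma~\ref{th:qcf_rep} with the observation that $\{v' : v \in \Us\}$ is precisely the space of $2N$-periodic sequences of zero mean, so that the equation $\<z',v'\> = \<E_{{\rm qcf},F}u',v'\>$ determines $z'$ uniquely up to an additive constant. First, by the definition of $L_1$ and of $z = L_1^{-1} L_{{\rm qcf},F} u$, together with \eqref{qcfvar1}, we have
\begin{equation*}
  \< z', v' \> = \< L_1 z, v \> = \< L_{{\rm qcf},F} u, v \>
  = \< E_{{\rm qcf},F} u', v' \> \qquad \forall v \in \Us.
\end{equation*}
Since $v \in \Us$ is $2N$-periodic with zero mean, the family $\{v': v \in \Us\}$ is exactly the zero-mean $2N$-periodic sequences, whose orthogonal complement in $\R^{2N}$ under $\<\cdot,\cdot\>$ is spanned by $e = (1,\dots,1)^T$. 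Hence there exists a constant $c \in \R$ such that
\begin{equation*}
  z_\ell' = (E_{{\rm qcf},F} u')_\ell + c \qquad \text{for } \ell = -N+1,\dots,N.
\end{equation*}

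Next I would fix $c$ by imposing $z \in \Us$: since $z$ is $2N$-periodic, summation by parts yields $\sum_\ell z_\ell' = 0$, so
\begin{equation*}
  c = -\frac{1}{2N}\sum_{\ell=-N+1}^N (E_{{\rm qcf},F} u')_\ell
    = -\frac{\eps}{2}\sum_{\ell=-N+1}^N (E_{{\rm qcf},F} u')_\ell.
\end{equation*}
The two ``Heaviside'' contributions in \eqref{qcfvar2} drop out immediately: because $h \in \Us$ has zero mean and is $2N$-periodic, the shifted sums $\sum_\ell h_{\ell \pm K \mp 1}$ vanish. Hence only $\sum_\ell \sigma_\ell(u')$ contributes to $c$.

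The only nontrivial piece is evaluating $\sum_\ell \sigma_\ell(u')$. The term $\phi_F'' \sum_\ell u_\ell'$ vanishes because $u \in \Us$. For the second-neighbour part, I would count the multiplicity with which each $u_m'$ appears in $\sum_{\ell=-K+1}^{K}(u_{\ell-1}' + 2u_\ell' + u_{\ell+1}')$: the indices $m = -K, K+1$ each appear with multiplicity $1$, the indices $m = -K+1, K$ with multiplicity $3$, and all interior indices with multiplicity $4$. Combining with the continuum contribution $4\sum_{\ell \notin \{-K+1,\dots,K\}} u_\ell' = -4\sum_{\ell=-K+1}^K u_\ell'$ (again using $\sum_\ell u_\ell' = 0$), the interior terms cancel and one is left with
\begin{equation*}
  \sum_{\ell=-N+1}^N \sigma_\ell(u') = \phi_{2F}''\big(u_{-K}' - u_{-K+1}' - u_K' + u_{K+1}'\big).
\end{equation*}
Substituting this into the formula for $c$ and adding $(E_{{\rm qcf},F}u')_\ell$ yields exactly the claimed expression for $z_\ell'$.

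The main bookkeeping obstacle is the interface accounting in the sum of $\sigma_\ell(u')$; once the multiplicities at $\ell = \pm K$ are handled correctly, the rest is a direct consequence of the orthogonality structure of $\Us$ and the zero-mean property of $h$.
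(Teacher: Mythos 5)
Your proposal is correct and follows essentially the same route as the paper: use the representation $\<z',v'\> = \<E_{{\rm qcf},F}u',v'\>$, conclude that $z'$ equals $E_{{\rm qcf},F}u'$ minus its mean, observe that the Heaviside terms have zero mean, and compute the mean of $\sigma(u')$. The only cosmetic difference is that you evaluate $\sum_\ell \sigma_\ell(u')$ by counting multiplicities at the interface, whereas the paper rewrites $\sigma_\ell = A_F u_\ell' + \phi_{2F}''(u_{\ell-1}'-2u_\ell'+u_{\ell+1}')$ in the atomistic region and telescopes; both give the same boundary terms.
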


\begin{remark}
  We note that the term $\frac12\eps \{ u_{-K}' - u_{-K+1}' - u_K' +
  u_{K+1}'\}$ is the average of $\sigma$, and the function $h$ is a
  periodic heaviside function defined in \eqref{eq:heaviside}.
\end{remark}

\begin{proof}[Proof of Lemma \ref{th:qppqcf:lemma_z}]
  The function $z$ is the solution of the variational
  principle
  \begin{displaymath}
    \< z', v' \> = \< L_{{\rm qcf},\,F} u, v\>
    = \< E_{{\rm qcf},F} u', v' \>
  \end{displaymath}
  where $E_{{\rm qcf}, F}$ is defined in \eqref{qcfvar2}, and is given
  by
  \begin{equation*}
    E_{{\rm qcf},F} u'_\ell = \sigma_\ell(u') - \alpha_{-K}(u') h_{\ell+K-1}
    + \alpha_K(u') h_{\ell-K-1}.
  \end{equation*}
  We note that a function $w \in \R^{2N}$ is a gradient, that is, $w =
  v'$ for some $v \in \Us$, if and only if $\sum_{\ell = -N+1}^N
  w_\ell = 0$. Hence, we obtain $z' = E_{{\rm qcf},F} u' -
  \overline{E_{{\rm qcf},F} u'}$ where $\overline{E_{{\rm qcf},F} u'}
  := \frac12 \eps \sum_{\ell = -N+1}^N E_{{\rm qcf},F} u'_\ell$. Since
  $h$ has zero mean, we only need to compute $\bar\sigma$,
  \begin{displaymath}
    \bar\sigma := \frac{1}{2N} \sum_{\ell = -N+1}^N \sigma_\ell \\
    = \frac{A_F}{2N} \sum_{\ell = -N+1}^N  u_\ell'
    + \frac{\phi_{2F}''}{2N} \sum_{\ell = -K+1}^K (u_{\ell-1}' - 2 u_\ell'
    + u_{\ell+1}').
  \end{displaymath}
  Since $u$ is periodic, $u'$ has zero mean, and hence the first sum
  on the right-hand side vanishes. The second sum has telescope
  structure, and we obtain
  \begin{displaymath}
    \overline{E_{{\rm qcf},F} u'} = \bar\sigma
    = \frac{\eps}{2} \phi_{2F}'' ( u_{-K}' - u_{-K+1}' - u_K' + u_{K+1}' ).
  \end{displaymath}
  This concludes the proof of the lemma.
\end{proof}

We are now ready to conclude the proof of Theorem
\ref{th:qcf:stability}.

\begin{proof}[Proof of Theorem \ref{th:qcf:stability}]
We set $z = L_1^{-1} L_{{\rm qcf},\,F} u$ and use Lemma
\ref{th:qppqcf:lemma_z} to deduce the bound
\begin{equation}
  \label{eq:qcf_stab_proof_1}
  \begin{split}
    \|z'\|_{\ell^\infty_\eps} \geq~& \|\sigma(u')\|_{\ell^\infty_\eps}
    - 2 \eps |\phi_{2F}''| \|u'\|_{\ell^\infty_\eps} \\
    & - \max(|\alpha_{-K}(u')|, |\alpha_K(u')|) \max_{\ell}
    (|h_{\ell+K-1}| + |h_{\ell-K-1}|).
  \end{split}
\end{equation}
To bound the first term on the right-hand side, we note that
\begin{displaymath}
  |\sigma_\ell(u')| \geq \phi_F'' |u_\ell'| + 4 \phi_{2F}'' \|u'\|_{\ell^\infty_\eps},
\end{displaymath}
which immediately implies
\begin{equation}
  \label{eq:a_nice_formula}   
  \|\sigma(u')\|_{\ell^\infty_\eps} \geq A_F \|u'\|_{\ell^\infty_\eps}.
\end{equation}
To bound the third term on the right-hand side of
\eqref{eq:qcf_stab_proof_1}, we crudely estimate
\begin{displaymath}
  \max_{\ell = -N+1, \dots, N} (|h_{\ell+K-1}| + |h_{\ell-K-1}|)
  \leq 1 - \smfrac12 \eps,
\end{displaymath}
which is true whenever $K \geq 1,$ and deduce from \eqref{alpha} that
\begin{displaymath}
\max(|\alpha_{-K}(u')|, |\alpha_K(u')|)\le 4|\phi_{2F}''|.
\end{displaymath}
The additional term $-\frac12 \eps$
cancels with the second term on the right-hand side of
\eqref{eq:qcf_stab_proof_1}, so that we obtain
\begin{displaymath}
  \|z'\|_{\ell^\infty_\eps} \geq (\phi_F'' + 8 \phi_{2F}'') \| u' \|_{\ell^\infty_\eps}.
\end{displaymath}
Employing Corollary \ref{th:Lqcfinv_and_L1invLqcf} and Formula
\eqref{eq:L1invLqcf_inv}, we obtain Theorem
\ref{th:qcf:stability}.
\end{proof}

\section{Proof of Theorem \ref{th:qcf:infsup}: Instability of $L_{{\rm
      qcf}, F}$}
\label{p}

We now prove Theorem~\ref{th:qcf:infsup} on the instability of
$L_{{\rm qcf},F}$ as an operator acting between $\Us^{1,p}$ and
$\Us^{-1,p}$, $1 \leq p < \infty$.  The bound $\| L_{{\rm qcf},F}^{-1}
\|_{L(\Us^{-1,p}, \Us^{1,p})} \geq C N^{1/p}$ follows from the
following lemma.

\begin{lemma}
Suppose that $\phi_F''>0$, $\phi_{2F}'' \in \R \setminus \{0\},$
and $p,q \in \R$ satisfy
$1 \leq p < \infty,$ $1 < q \leq \infty,$ and
$\frac{1}{p} + \frac{1}{q} = 1.$ Then there exists a constant
$C > 0$ such that
\begin{displaymath}
  \inf_{\substack{\vb\in\Us \\ \lpnorm{\vb'}{p} = 1}} \
  \sup_{\substack{\wb\in\Us \\ \lpnorm{\wb'}{q} = 1}}
  \<  L_{{\rm qcf},F} \vb,\,\wb \>
  \le C N^{-1/p}.
\end{displaymath}
\end{lemma}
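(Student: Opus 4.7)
The plan is to exhibit an explicit test function $v \in \Us$ with $\|v'\|_{\ell^p_\eps}$ of order one for which
\[
\|L_{{\rm qcf},F}\, v\|_{\Us^{-1,p}}
 \;=\; \sup_{\substack{w \in \Us \\ \|w'\|_{\ell^q_\eps} = 1}} \langle L_{{\rm qcf},F}\, v,\, w\rangle
 \;\leq\; C\,N^{-1/p}.
\]
This is exactly the inf-sup estimate of the lemma; writing $f = L_{{\rm qcf},F}\, v$ and rearranging converts it into the lower bound $\|L_{{\rm qcf},F}^{-1}\|_{L(\Us^{-1,p},\Us^{1,p})} \geq C^{-1} N^{1/p}$ of Theorem~\ref{th:qcf:infsup}.

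To design $v$, I would use the variational representation from the proof of Theorem~\ref{th:qcf:stability},
\[
\langle L_{{\rm qcf},F}\, v,\, w \rangle
 \;=\; \langle \sigma(v'),\, w' \rangle
 \;+\; \alpha_{-K}(v')\, w_{-K}
 \;-\; \alpha_K(v')\, w_K,
\]
and choose $v'$ so that the bulk term $\langle \sigma(v'), w'\rangle$ and the interface contributions are controlled simultaneously. A natural ansatz is a piecewise-constant profile of $v'$ on the atomistic and continuum regions (the zero-mean condition fixes the ratio of the two values), supplemented by a short boundary-layer correction on a bounded number of sites adjacent to $\pm K$ whose coefficients are tuned to make $\sigma(v') - \overline{\sigma(v')}$ and $\alpha_{\pm K}(v')$ small in a way that the resulting pairing with any normalized $w$ is bounded by $N^{-1/p}$. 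To handle the point-evaluation terms $w_{\pm K}$, I would invoke the discrete Sobolev estimate $|w_{\pm K}| \leq \|w\|_{\ell^\infty} \leq C \|w'\|_{\ell^q_\eps}$ with $C$ independent of $N$, which holds since $q > 1$ and the underlying interval has fixed length $2$.

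The construction is closely modelled on the proof of Theorem 7.1 in \cite{dobs-qcf2}, which establishes the analogous instability for the QCF method with Dirichlet boundary conditions. The periodic setting introduces two modifications: one must accommodate \emph{both} interfaces simultaneously, and one must project onto zero-mean functions via $\mathcal{P}_\Us$. The second contributes an extra term of order $\eps = 1/N$, which is subcritical relative to the target $N^{-1/p}$ for all $p < \infty$ and is absorbed into the final constant $C$.

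The main obstacle is the explicit construction of the boundary-layer profile so that all three quantities $\sigma(v') - \overline{\sigma(v')}$, $\alpha_{-K}(v')$, and $\alpha_K(v')$ are controlled at the required level. Once the ansatz is in hand, the remaining steps—H\"older's inequality for the bulk pairing, the Sobolev embedding for the interface pairing, and verification that $\|v'\|_{\ell^p_\eps}$ remains of order one—are routine. Note that the bound degenerates as $p \to \infty$ (since $N^{-1/p} \to 1$), which is consistent with the $\Us^{1,\infty}$ stability established in Theorem~\ref{th:qcf:stability}.
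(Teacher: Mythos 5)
Your overall framing is the right one---exhibit an explicit $v\in\Us$ with $\lpnorm{v'}{p}$ of order one whose image under $L_{{\rm qcf},F}$ is small in $\Us^{-1,p}$, using the conjugate (variational) representation of the operator---but the core of your construction would fail. You propose to take $v'$ piecewise constant on the atomistic and continuum regions (values $a$, $b$ linked by the zero-mean condition), add a boundary-layer correction on $O(1)$ sites, and tune it so that \emph{both} $\sigma(v')-\overline{\sigma(v')}$ and $\alpha_{\pm K}(v')$ are small, bounding the point terms via $|w_{\pm K}|\le C\lpnorm{w'}{q}$. These two smallness requirements are incompatible with $\lpnorm{v'}{p}\sim 1$. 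Indeed, $\sup_{\lpnorm{w'}{q}=1}\<\sigma(v')-\bar\sigma,w'\>$ is comparable to $\lpnorm{\sigma(v')-\bar\sigma}{p}$ by \eqref{eq:norm_equiv}; for your ansatz $\sigma_\ell(v')$ equals $A_F a$, resp.\ $A_F b$, away from the interfaces, while $\bar\sigma=O(\eps)$ (the second-difference sum telescopes), so already for $K=N/2$ the requirement $\lpnorm{\sigma(v')-\bar\sigma}{p}\lesssim N^{-1/p}$ forces $|a|,|b|\lesssim N^{-1/p}$. The bounded boundary layer contributes at most $O(N^{-1/p})$ as well, hence $\lpnorm{v'}{p}=O(N^{-1/p})$, and after the final normalization $v/\lpnorm{v'}{p}$ your bound degenerates to $O(1)$: no instability is detected. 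The obstruction is structural, not an artifact of the ansatz: when $A_F>0$ one has $\sigma_\ell(v')=A_F v'_\ell$ throughout the continuum region, so near-constancy of $\sigma(v')$ pins $v'$ there to $\bar\sigma/A_F$, and together with the zero-mean constraint (and the exponential character of the recurrence $A_F x_\ell+\phi_{2F}''(x_{\ell+1}-2x_\ell+x_{\ell-1})=\mathrm{const}$ in the atomistic region) this drives $v'$ to $o(1)$ outside $O(1)$-size layers.

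The paper's proof uses the opposite mechanism: a cancellation between two $O(1)$ contributions rather than making each small. It chooses $v'_\ell=h_{\ell-K-1}$, the periodic sawtooth of \eqref{eq:heaviside} centered at one interface (so $v'$ is piecewise \emph{linear} in $\ell$ and differs from any piecewise-constant profile at $O(N)$ sites), modified only at $\ell=K-1,\dots,K+2$. Then $\alpha_{-K}(v')=0$, $\alpha_K(v')=-A_F$, and in the representation $E_{{\rm qcf},F}v'_\ell=\sigma_\ell(v')-\alpha_{-K}(v')h_{\ell+K-1}+\alpha_K(v')h_{\ell-K-1}$ of Lemma~\ref{th:qcf_rep} the bulk term $\sigma_\ell(v')=A_F h_{\ell-K-1}$ (valid away from five sites) is cancelled identically by $\alpha_K(v')h_{\ell-K-1}$; what remains is supported on five sites with entries bounded independently of $N$ and $K$, so H\"older gives $\<L_{{\rm qcf},F}v,w\>\le C\eps^{1/p}\lpnorm{w'}{q}$, while $|v'_\ell|\ge 1/4$ on about $N/2$ sites keeps $\lpnorm{v'}{p}$ bounded below. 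Your discrete Sobolev bound for $w_{\pm K}$ is correct but unnecessary once the point evaluations are rewritten as pairings with the shifted Heaviside ($w_K=-\<\tilde h_{\cdot-K},w'\>$), which is exactly what makes the bulk/interface cancellation visible; your remarks on the $O(\eps)$ projection correction and on the degeneration of the bound as $p\to\infty$ are fine.
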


\begin{proof}
  We recall from Lemma~\ref{th:qppqcf:lemma_z} that we can represent
  $L_{{\rm qcf},F} v$ in the form
  \begin{equation}
  \label{qcfvar3}
    \< L_{{\rm qcf},F} v, w \> = \< E_{{\rm qcf},F} v', w'\> \qquad \forall w \in \Us,
  \end{equation}
  where
  \begin{equation*}
    E_{{\rm qcf}, F} v'_\ell = \sigma_\ell(v') - \alpha_{-K}(v') h_{\ell+K-1}
    + \alpha_K(v') h_{\ell-K-1},
  \end{equation*}
  and where
  \begin{align*}
    \sigma_\ell(v') &= \cases{
      \phi_F''v_\ell' + \phi_{2F}''(v_{\ell-1}'+2v_\ell' + v_{\ell+1}'),
      & \ell = -K+1,\dots,K, \\
      (\phi_F'' + 4 \phi_{2F}'') v_\ell', & \text{otherwise},
    }\\
    \alpha_K(v') &= \phi_{2F}''(v_{K+2}' - 2 v_{K+1}' + v_{K}'), \\
    \alpha_{-K}(v') &= \phi_{2F}''(v_{-K+1}' - 2v_{-K}' + v_{-K-1}'),\\
    h_\ell &= \cases{
      \hphantom{-} \frac12 (1-\eps\ell)- \smfrac{\eps}{4},
      & \ell \geq 0, \\
      - \frac12 (1+\eps\ell)-\smfrac{\eps}{4}, & \ell < 0.
    }
  \end{align*}
  We choose $v \in \Us$ with derivative given by
  \begin{equation*}
    v'_\ell = \cases{
      0,& \ell = K-1,\\
      -\frac{A_F}{6 \phi''_{2F}},& \ell = K,\\
      \hphantom{-} \frac{A_F}{3 \phi''_{2F}},& \ell = K+1,\\
      -\frac{A_F}{6 \phi''_{2F}},& \ell = K+2,\\
      h_{\ell-K-1}, & \text{otherwise}.
    }
  \end{equation*}
  Such a representation is possible if and only if the vector
  $(v_\ell')_{\ell = -N+1}^N$ defined above has zero mean. To see that
  this holds, we use the symmetry of $h_\ell$ to calculate
  \begin{equation*}
    \sum_{\ell=-N+1}^N v'_\ell
    = \sum_{\ell \neq K-1,\,K,\,K+1,\,K+2} h_{\ell-K-1} = 0.
  \end{equation*}

  If we insert $v$ into the equations above, we find that
  \begin{displaymath}
    \alpha_{-K}(v') = 0, \quad \alpha_K(v') = - A_F,
  \end{displaymath}
  and
  \begin{equation*}
    \sigma_\ell(v') = \cases{
      (\phi''_F + 2\phi''_{2F}) h_{-3} + \phi''_{2F} h_{-4},& \ell = K-2,\\[6pt]
      \phi''_{2F} h_{-3} - \smfrac{1}{6} A_F, & \ell = K-1,\\[6pt]
      -\frac{\phi''_F A_F}{6 \phi''_{2F}},
      & \ell = K,\\[6pt]
      \hphantom{-}
      \frac{A_F^2}{3 \phi''_{2F}}, & \ell = K+1,\\[6pt]
      -\frac{A_F^2}{6 \phi''_{2F}}, & \ell = K+2,\\[6pt]
      A_Fh_{\ell-K-1} , & \text{otherwise},\\[6pt]
    }
  \end{equation*}
  which implies that
  \begin{equation*}
    \begin{split}
      E_{{\rm qcf},F} v'_\ell = \cases{
        - 2\phi''_{2F} h_{-3} + \phi''_{2F} h_{-4},& \ell = K-2,\\[6pt]
        \phi''_{2F} h_{-3} - \smfrac{1}{6} A_F- A_F h_{-2}, & \ell = K-1,\\[6pt]
        -\frac{\phi''_F A_F}{6 \phi''_{2F}}- A_F h_{-1},
        & \ell = K,\\[6pt]
        \hphantom{-}
        \frac{A_F^2}{3 \phi''_{2F}} - A_F h_{0}, & \ell = K+1,\\[6pt]
        -\frac{A_F^2}{6 \phi''_{2F}} - A_F h_{1}, & \ell = K+2,\\[6pt]
        0, & \text{otherwise}.\\[6pt]
      }
    \end{split}
  \end{equation*}
  Note that all the terms above are bounded in absolute value, independently
  of $N$ and $K.$

  Inserting these formulas into~\eqref{qcfvar3}, applying H\"older's
  inequality, and using the fact that $E_{{\rm qcf},F} v'_\ell$ is nonzero for only
  five indices, we obtain
  \begin{equation*}
    \begin{split}
      \< L_{{\rm qcf},F} v, w\> &= \< E_{{\rm qcf},F} v', w'\> \\
      &\leq \lpnorm{E_{{\rm qcf},F} v'}{p} \lpnorm{w'}{q} \\
      &\leq \eps^{1/p} \Big[
      5 \lpnorm{E_{{\rm qcf},F} v'}{\infty}^p
      \Big]^{1/p} \lpnorm{w'}{q} \\
      &\leq C \eps^{1/p}\lpnorm{w'}{q}.
    \end{split}
  \end{equation*}

  It remains to show that $\|v'\|_{\ell^p_\eps}$ is bounded below as
  $N \to \infty$. As a matter of fact, it can be seen from the
  definition of $v_\ell'$ that
  \begin{displaymath}
    |v'_{\ell}| \ge \smfrac{1}{4} \qquad
    \text{for } j = K+1-N/2,\dots,K-1,
  \end{displaymath}
  which gives
  \begin{displaymath}
    \lpnorm{v'}{p} \geq \Bigg[\sum_{\ell=K+1-N/2}^{K-1} \eps
    \big(\smfrac{1}{4}\big)^p \Bigg]^{1/p}
    = \smfrac{1}{4} \big[(N/2-2) \eps\big]^{1/p}.
  \end{displaymath}
  Thus, replacing $v$ by $v / \lpnorm{v'}{p}$ gives the desired
  result.
\end{proof}

\end{document}